\newtheorem{theorem}{Theorem}[section]
\newtheorem{lemma}[theorem]{Lemma}
\newtheorem{cor}[theorem]{Corollary}
\newtheorem{prop}[theorem]{Proposition}
\newtheorem*{xtheorem}{Theorem}
\theoremstyle{remark}
\declaretheorem[name=Example,qed={\lower-0.3ex\hbox{$\triangleleft$}}]{example}
\newcommand{\mS}{\ensuremath{\mathcal{S}}}
\newcommand{\mV}{\ensuremath{\mathcal{V}}}
\newcommand{\bz}{\ensuremath{\mathbf{z}}}
\newcommand{\bw}{\ensuremath{\mathbf{w}}}
\newcommand{\by}{\ensuremath{\mathbf{y}}}
\newcommand{\bo}{\ensuremath{\mathbf{1}}}
\newcommand{\bi}{\ensuremath{\mathbf{i}}}
\newcommand{\ox}{\ensuremath{\overline{x}}}
\newcommand{\oz}{\ensuremath{\overline{z}}}
\newcommand{\oy}{\ensuremath{\overline{y}}}
\newcommand{\sgn}{\operatorname{sgn}}
\newcommand{\bp}{\mbox{\boldmath$\rho$}}
\newcommand{\bzer}{\ensuremath{\mathbf{0}}}
\newcommand{\bs}{\mbox{\boldmath$\sigma$}}
\newcommand{\bt}{\mbox{\boldmath$\theta$}}
\newcommand{\FunList}[3]{
    \begin{tabular}{cl} \multirow{2}{*}{\LARGE #1} & #2 \\ & #3\end{tabular}
}
\newcommand{\diag}[1]{
  \begin{tikzpicture}[scale=0.2]\makediagb{#1}\end{tikzpicture}
}
\def\testbb#1{\testbb@i#1,,\@nil}%
\def\testbb@i#1,#2,#3\@nil{%
  \draw (O) --++(#1);
  \ifx\relax#2\relax\else\testbb@i#2,#3\@nil\fi}
\newcommand{\makediagb}[1]{
    \coordinate (O) at (0,0); \coordinate (N) at (0,1);
    \coordinate (NE) at (1,1); \coordinate (E) at (1,0);
    \coordinate (SE) at (1,-1); \coordinate (S) at (0,-1);
    \coordinate (SW) at (-1,-1);\coordinate (W) at (-1,0);
    \coordinate (NW) at (-1,1); \coordinate (B1) at (1.2,1.2);
    \coordinate (B2) at (-1.2,-1.2);
    
    \draw (B1) --++(0,-2.4); \draw (B1) --++ (-2.4,0);
    \draw (B2) --++(0,2.4);  \draw (B2) --++ (2.4,0);   
    \testbb{#1}
}
\author{Stephen Melczer}
\author{Marni Mishna}
\title[Lattice path enumeration using diagonals]{Asymptotic lattice path enumeration using diagonals}
\address[S. Melczer]{Cheriton School of Computer Science, University of Waterloo, Waterloo ON Canada  \&  U. Lyon, CNRS, ENS de Lyon, Inria, UCBL, Laboratoire LIP}
\email{smelczer@uwaterloo.ca}
\address[M. Mishna]{Department of Mathematics, Simon Fraser University, Burnaby BC, Canada, V5A 1S6}
\email{mmishna@sfu.ca}
\keywords{Lattice path enumeration, D-finite, diagonal, analytic
  combinatorics in several variables, Weyl chambers}
\begin{document}
\begin{abstract}
  This work presents new asymptotic formulas for family of walks in
  Weyl chambers. The models studied here are defined by step sets
  which exhibit many symmetries and are restricted to the first
  orthant.  The resulting formulas are very straightforward: the
  exponential growth of each model is given by the number of steps,
  while the sub-exponential growth depends only on the dimension of
  the underlying lattice and the number of steps moving forward in
  each coordinate. These expressions are derived by analyzing the
  singular variety of a multivariate rational function whose diagonal
  counts the lattice paths in question. Additionally, we show how to
  compute subdominant growth for these models, and how to determine
  first order asymptotics for excursions.
\end{abstract}
\maketitle

\section{Introduction}
\label{sec:intro}
The reflection principle and its various incarnations have been
indispensable in the study of the lattice path models, particularly in
the discovery of explicit enumerative formulas. Two examples include
the formulas for the family of reflectable walks in Weyl chambers of
Gessel and Zeilberger~\cite{GeZe92}, and various approaches using the
widely applied kernel method~\cite{Bous05, BoPe03, JaPrRe08, BoMi10}. 
In these guises,
the reflection principle is often a key element in the solution when
the resulting generating function is shown to be D-finite\footnote{A
  function is D-finite if it satisfies a linear differential equation
  with polynomial coefficients}. This is no coincidence: the
connection is an expression for the generating function as a diagonal
of a rational function. More precisely, in works such as~\cite{GeZe92,
  BoMi10}, the analysis results in generating functions expressed
as rational sub-series extractions, which can be easily converted to
diagonal expressions.  Unfortunately, the resulting explicit
representations of generating functions can be cumbersome to
manipulate. For example, much recent work on walks in Weyl chambers
has led to expressions which are determinants of large matrices with
Bessel function entries~\cite{GrMa93, Grab95, Xin10}. Here, we aim to
determine asymptotics for a family of lattice path models arising
naturally among those restricted to positive orthants -- which
correspond to walks in certain Weyl chambers -- while avoiding such
unwieldly representations.  This is acheived by working directly with
the diagonal expressions obtained through the recently developed
machinery on analytic combinatorics in several
variables~\cite{PeWi13}.

Coupling these techniques -- diagonal representation and analytic combinatorics in 
several variables -- yields explicit, yet simple, asymptotic
formulas for families of lattice paths. The focus of this article is
$d$-dimensional models whose set of allowable steps is symmetric with
respect to any axis; we call these models~\emph{highly symmetric}
walks. The techniques of analytic combinatorics
in several variables apply in a rather straightforward way to derive
dominant asymptotics for the number of walks ending anywhere and give
an effective procedure to calculate descending terms in the asymptotic
expansions. Furthermore, we also consider the subfamily of walks that return to
the origin (known as~\emph{excursions}). Once our equations are
established, they are suitable input to existing implementations such
as that of Raichev~\cite{Raic12} (however, in practice one can calculate only the
first few terms in these expansions).

The highly symmetric walks we present are amenable to a kernel method
treatment. In particular, they fit well into the ongoing study of
lattice path classes restricted to an orthant and taking only
``small'' steps~\cite{BoMi10,BoRaSa14}. This collection of models
forms a little universe exhibiting many interesting phenomena, and
recent work in two and three dimensions has used novel applications of
algebra and analysis, along with new computational techniques, to
determine exact and asymptotic enumeration formulas.  One key
predictor of the nature of a model's generating function (whether it
is rational, algebraic, or transcendental D-finite, or none of these)
is the order of a group that is associated to each model. This group
has its origins in the probabilistic study of random walks,
namely~\cite{FaIaMa99}, and when the group is finite it can sometimes be
used to write generating functions as the positive part of an explicit
multivariate rational Laurent series. The intimate relation between
the generating function of the walks and the nature of the generating
function is explored in~\cite{BoRaSa14, MeMi14a}.

For highly symmetric models in two and three dimensions, this group
coincides with that of a Weyl group for walks in the Weyl chambers
$A_1^2$ and $A_1^3$, respectively.  Indeed, one can use either
viewpoint to generalize the study of highly
symmetric models to models in arbitrary dimension.  As these viewpoints are
largely isomorphic, and the kernel method viewpoint is more
self-contained, we begin this article by working through a
straightforward generalization of the kernel method in order to write
the generating function for higher dimensional highly symmetric walks
as diagonals of rational functions.  We then perform an asymptotic
analysis of the coefficients of counting generating functions using
techniques from the study of analytic combinatorics in several
variables, and consequently link some of the combinatorial symmetries
in a walk model to both analytic properties of the generating function
and geometric properties of an associated variety.  After this is
complete, we examine how this connects to the notion of walks in Weyl
chambers, use results from their study to determine asymptotic results
about excursions, and discuss how the Weyl chamber viewpoint can be
used in future work to examine larger classes of lattice path models
through diagonals. Next we specify the walks we study in order to
precisely state our main results. 

\subsection{Highly Symmetric Walks} 
Concretely, the lattice path models we consider are restricted as
follows.  For a fixed dimension $d$, we define a model by its step set
$\mS \subseteq \{\pm1,0\}^d \setminus \{\mathbf{0}\}$ and say
that~$\mS$ is \emph{symmetric about the $x_k$ axis} if
$(i_1,\dots,i_k,\dots,i_d) \in \mS$ implies
$(i_1,\dots,-i_k,\dots,i_d) \in \mS$.  We further impose a
non-triviality condition: for each coordinate there is at least one
step in~$\mS$ which moves in the positive direction of that coordinate (this
implies that for each coordinate there is a walk in the model which moves 
in that coordinate). 

The number of walks taking steps in~$\mS$ which are restricted to the
positive orthant $\mathbb{N}^d = \mathbb{Z}_{\geq0}^d$ are studied by
expressing the counting generating functions of such models as
positive parts of multivariate rational Laurent series, which are then
converted to diagonals of rational functions in $d+1$ variables. A
first consequence is that all of these models have D-finite generating
functions (since D-finite functions are closed under the diagonal
operation).

After the above manipulations, these models are very well suited to the asymptotic
enumeration methods for diagonals of rational functions outlined
in~\cite{PeWi13}, in particular the cases which were developed by
Pemantle, Raichev and Wilson in~\cite{PeWi02} and~\cite{RaWi08}. 
Following these methods, we study the singular variety of the denominator of this 
rational function to determine related
asymptotics. The condition of having a symmetry across each axis
ensures that the variety is smooth and allows us to calculate the
leading asymptotic term explicitly.  This is not generally the case, in
our experience, and hence we focus on this particular kind of
restriction.

\begin{table}
\centering
\begin{tabular}{ | c | c @{ \hspace{0.01in} }@{\vrule width 1.2pt }@{ \hspace{0.01in} } c | c | }
  \hline
   $\mS$ & Asymptotics & $\mS$ & Asymptotics \\ \hline
  &&&\\[-5pt] 
  \diag{N,S,E,W}  & $\displaystyle \frac{4}{\pi\sqrt{1 \cdot 1}} \cdot n^{-1} \cdot 4^n = \frac4\pi \cdot \frac{4^n}n$ & 
  \diag{NE,SE,NW,SW} & $\displaystyle \frac{4}{\pi\sqrt{2 \cdot 2}} \cdot n^{-1} \cdot 4^n = \frac2\pi \cdot \frac{4^n}n$ \\[+7mm]
  \diag{N,S,NE,SE,NW,SW} & $\displaystyle \frac{6}{\pi\sqrt{3 \cdot 2}} \cdot n^{-1} \cdot 6^n = \frac{\sqrt{6}}\pi \cdot \frac{6^n}n$ &
  \diag{N,S,E,W,NW,SW,SE,NE} & $\displaystyle \frac{8}{\pi\sqrt{3 \cdot 3}} \cdot n^{-1} \cdot 8^n = \frac{8}{3\pi} \cdot \frac{8^n}n$ \\[5mm]
\hline
\end{tabular}\\[2mm]

\caption{The four highly symmetric models with unit steps in the quarter plane.} \label{tab:verif}
\end{table}

\subsection{Main results}
We present two main results in this work. The first appears as
Theorem~\ref{thm:asm}.
\begin{xtheorem}
  Let $\mS \subseteq \{-1,0,1\}^d \setminus \{\mathbf{0}\}$ be a set of unit
  steps in dimension $d$.  If $\mS$ is symmetric with respect to each
  axis, and $\mS$ takes a positive step in each direction, then the
  number of walks of length~$n$ taking steps in $\mS$, beginning at the origin, and never
  leaving the positive orthant has asymptotic expansion
\begin{equation*}
s_n  = \left[ \left(s^{(1)} \cdots s^{(d)}\right)^{-1/2} \pi^{-d/2} |\mathcal{S}|^{d/2}\right] \cdot n^{-d/2} \cdot |\mathcal{S}|^n + O\left( n^{-(d+1)/2} \cdot |\mathcal{S}|^n \right),
\end{equation*}
where $s^{(k)}$ denotes the number of steps in $\mS$ which have $k^\text{th}$ coordinate 1.
\end{xtheorem}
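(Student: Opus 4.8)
The plan is to feed the diagonal representation of the counting series into the smooth-point machinery of analytic combinatorics in several variables and simply read off the constant. Write $S(\bz) = \sum_{\mathbf{s}\in\mS}\bz^{\mathbf{s}}$ for the (Laurent) step polynomial and $\bar S(z_1,\dots,z_d) := z_1\cdots z_d\,S(z_1,\dots,z_d)$ for the honest polynomial obtained by clearing denominators. The kernel-method/orbit-sum computation carried out above expresses
\[
  \sum_{n\ge 0} s_n\, t^n \;=\; \Delta\, F(\bz), \qquad
  F(\bz) \;=\; \frac{G(\bz)}{H(\bz)} \;:=\; \frac{\prod_{k=1}^{d}(1+z_k)}{\,1 - z_{d+1}\,\bar S(z_1,\dots,z_d)\,},
\]
a diagonal of a rational function of the $d+1$ variables $\bz=(z_1,\dots,z_{d+1})$. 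The numerator $\prod_k(1+z_k)$ is the factor $\prod_k(1-z_k^{-2})/(1-z_k^{-1})$ that survives the alternating sum over the sign-change group $(\mathbb{Z}/2\mathbb{Z})^d$ forced by the axis symmetries, and it is precisely this symmetry that makes $H$ the simple, smooth polynomial above.

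Next I would verify the hypotheses of the smooth-point theorem. The variety $\mathcal{V}=\{H=0\}$ is smooth: $H_{z_{d+1}}=-\bar S$, and $z_{d+1}\bar S=1$ on $\mathcal{V}$ forces $\bar S\neq 0$, hence $\nabla H\neq 0$, everywhere on $\mathcal{V}$. For minimality, $\bar S$ has nonnegative exponents, so $|\bar S(\bz)|\le \bar S(1,\dots,1)=|\mS|$ whenever $|z_1|,\dots,|z_d|\le 1$; thus every point of $\mathcal{V}$ in that closed polydisc has $|z_{d+1}|\ge 1/|\mS|$, so the minimal points lie on the torus $|z_1|=\dots=|z_d|=1$, $|z_{d+1}|=1/|\mS|$. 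On that torus equality $|\bar S|=|\mS|$ requires all monomials $\bz^{\mathbf{s}}$ to be aligned; subtracting a step with $s_k=1$ (one exists by the positive-step hypothesis) from its reflection across the $k$-th axis (also in $\mS$) then forces $z_k^2=1$, i.e.\ $z_k\in\{\pm1\}$ for each $k\le d$. At any such point other than $(1,\dots,1)$ some $z_k=-1$, so the numerator $G$ vanishes there and it contributes nothing. Hence $\bz^{\ast}=(1,\dots,1,1/|\mS|)$ is the unique contributing critical point; one checks that it solves the critical-point equations $z_kH_{z_k}=z_{d+1}H_{z_{d+1}}$, using $\partial_{z_k}S(1,\dots,1)=\sum_{\mathbf{s}}s_k=s^{(k)}-s^{(k)}=0$ by axis symmetry.

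With a single nondegenerate smooth critical point, the Pemantle--Wilson/Raichev--Wilson formula gives
\[
  s_n \;\sim\; \frac{G(\bz^{\ast})}{-\,z_{d+1}^{\ast}\,H_{z_{d+1}}(\bz^{\ast})}\cdot\frac{\bigl(z_1^{\ast}\cdots z_{d+1}^{\ast}\bigr)^{-n}}{(2\pi n)^{d/2}\,\sqrt{\det\mathcal{H}}},
\]
where $\mathcal{H}$ is the Hessian at the origin of the phase obtained after taking the residue in $z_{d+1}$ and substituting $z_k=z_k^{\ast}e^{i\theta_k}$. Here $G(\bz^{\ast})=2^d$, $z_{d+1}^{\ast}H_{z_{d+1}}(\bz^{\ast})=-z_{d+1}^{\ast}\bar S(1,\dots,1)=-1$, and $z_1^{\ast}\cdots z_{d+1}^{\ast}=1/|\mS|$, which already yields the growth $|\mS|^n$ and the power $n^{-d/2}$. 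Since $z_k^{\ast}=1$, the factor $z_1\cdots z_d$ in $\bar S$ cancels the extra phase $i\sum_k\theta_k$ from the substitution and the phase collapses to $h(\bt)=\log|\mS|-\log S(e^{i\theta_1},\dots,e^{i\theta_d})$; a short computation gives $\partial_{\theta_j}\partial_{\theta_k}h(\mathbf{0})=|\mS|^{-1}\sum_{\mathbf{s}\in\mS}s_j s_k$ (first moments vanishing, again by symmetry). Axis symmetry makes this covariance matrix diagonal ($\sum_{\mathbf{s}}s_j s_k=0$ for $j\ne k$), with entries $|\mS|^{-1}\sum_{\mathbf{s}}s_k^2=2s^{(k)}/|\mS|$, so $\det\mathcal{H}=2^d\,s^{(1)}\cdots s^{(d)}/|\mS|^{d}$ --- positive, hence nondegenerate, exactly because each $s^{(k)}\ge 1$. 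Substituting and using $2^d/(2\pi)^{d/2}=\pi^{-d/2}2^{d/2}$ collapses the constant to $\bigl(s^{(1)}\cdots s^{(d)}\bigr)^{-1/2}\pi^{-d/2}|\mS|^{d/2}$, while $O\bigl(n^{-(d+1)/2}|\mS|^n\bigr)$ is the standard next-order correction in the smooth-point expansion.

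I expect the main obstacle to be the torus analysis in the second step: although the denominator is smooth, the minimal torus carries a whole orbit $\{\pm1\}^d\times\{1/|\mS|\}$ of critical points, and one must argue that the reflection-born numerator $\prod_k(1+z_k)$ annihilates every one of them except $\bz^{\ast}$, so that the clean single-smooth-point formula applies, and then track the constants through the Hessian's reduction to the step covariance matrix. The smoothness, the minimality of $\bz^{\ast}$, and the diagonalization of $\mathcal{H}$ are then each short consequences of the two symmetry hypotheses, and the rest is routine.
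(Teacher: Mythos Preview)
Your proposal is correct and follows essentially the same route as the paper: the diagonal expression, smoothness via $H_t\neq0$ on $\mathcal{V}$, the minimal points forced into $\{\pm1\}^d$ by the alignment-of-monomials argument, the numerator $\prod_k(1+z_k)$ killing all but $\bz^\ast$, and the diagonal Hessian (your covariance formulation $|\mS|^{-1}\sum_{\mathbf{s}}s_js_k$ is exactly the paper's $2S_1^{(k)}(\bo)/|\mS|$ on the diagonal). The one detail you elide is that to absorb the \emph{other} minimal points into the $O(n^{-(d+1)/2}|\mS|^n)$ error you must also verify that each of them is isolated and nondegenerate, which the paper does by showing $S_1^{(k)}(\bw_{\hat k})\neq0$ at every minimal $\bw$ through a separate modulus bound (otherwise a degenerate minimal point could in principle contribute a larger subexponential factor even though $G$ vanishes there).
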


This formula is easy to apply to any given model, and for
certain infinite families as well.

\begin{example} 
  When $d=2$ there are four non-isomorphic highly symmetric walks in
  the quarter plane, listed in Table~\ref{tab:verif}.  Applying
  Theorem~\ref{thm:asm} verifies the asymptotic results guessed
  previously by~\cite{ BoKa09}.
\end{example}

\begin{example}Let $\mS = \{-1,0,1\}^d \setminus \{\mathbf{0}\}$, the full
set of possible steps. This is symmetric across each axis. We compute
that $|\mathcal{S}|=3^d-1$, and $s^{(j)}=3^{d-1}$ for all $j$ and so
\[
s_n \sim \left( \frac{(3^d-1)^{d/2}}{3^{d(d-1)/2}\cdot\pi^{d/2}}\right) \cdot n^{-d/2} \cdot (3^d-1)^n. 
\]
\end{example}

\begin{example} Let~$e_k = (0,\dots,0,1,0,\dots,0)$ be the
  $k^\text{th}$ standard basis vector in~$\mathbb{R}^d$, and consider the set of steps
  \mbox{$\mS=\{e_1,-e_1,\dots,e_d,-e_d\}$}. Then the number of walks of
  length $n$ taking steps from~$\mS$ and never leaving the positive
  orthant has asymptotic expansion
\begin{equation*} 
s_n \sim \left(\frac{2d}{\pi}\right)^{d/2}\,n^{-d/2} \, (2d)^n.
\end{equation*}
\end{example}

The second main result is a comparable statement for excursions,
Theorem~\ref{thm:excursion}.
\begin{xtheorem}
Let $\mS \subseteq \{-1,0,1\}^d \setminus \{\mathbf{0}\}$ be a set of unit steps in dimension $d$.  If $\mS$ is symmetric with respect to each axis, and $\mS$ takes a positive step in each direction, then the number of walks $e_n$ of length $n$ taking steps in $\mS$, beginning and ending at the origin, and never leaving the positive orthant satisfies
\[ e_n = O\left(\frac{|\mS|^n}{n^{3d/2}}\right). \]
\end{xtheorem}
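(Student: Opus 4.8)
The plan is to follow the same diagonal-and-ACSV strategy used for Theorem~\ref{thm:asm}, but now extract the coefficient corresponding to the walk ending at the origin rather than summing over all endpoints. First I would recall the rational function whose diagonal enumerates the highly symmetric walks: writing $S(\bx) = \sum_{\mathbf{i}\in\mS}\bx^{\mathbf{i}}$ for the step polynomial and $F(\bx,t)$ for the associated $(d+1)$-variable rational function produced by the kernel-method manipulation, the counting sequence $s_n$ is the diagonal of $F$, while the excursion numbers $e_n$ arise as the coefficient of the \emph{constant term} in $\bx$ of the relevant Laurent expansion, i.e. $e_n = [x_1^0\cdots x_d^0 t^n]\,\widetilde{F}(\bx,t)$ for a closely related rational function $\widetilde F$. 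Concretely, since each coordinate carries a symmetry, the positive-part extraction contributes a factor $\prod_k (x_k - x_k^{-1})$ in the numerator; for excursions one keeps the $\bx^{\bzer}$ coefficient, and the key point is that this coefficient is itself the diagonal (in the $t$-variable) of a rational function in $d+1$ variables whose numerator now vanishes to order one in each $x_k$ near the relevant critical point.

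Next I would locate the contributing singularity. By the same argument as in Theorem~\ref{thm:asm}, the singular variety is smooth and the minimal critical point lies at $x_k = 1$ for each $k$ and $t = 1/|\mS|$, because the highly symmetric step polynomial $S(\bx)$ has its maximum on the torus at $\bx = \bo$ with value $|\mS|$, and the Hessian of $\log S$ there is diagonal with $k$-th entry governed by $s^{(k)}$ — exactly the data appearing in Theorem~\ref{thm:asm}. The difference for excursions is the behaviour of the numerator: the factor $\prod_k(x_k - x_k^{-1})$ vanishes at $\bx = \bo$. Rather than redo the full saddle-point integral, I would argue that each vanishing factor contributes an extra power of $n^{-1/2}$: heuristically, in the Gaussian integral $\int e^{-n\,Q(\by)/2}\,\prod_k y_k\,d\by$ over the local coordinates $y_k = x_k - 1$, the odd linear factors reduce the naive $n^{-d/2}$ down by $d$ further half-powers, giving $n^{-d/2}\cdot n^{-d} = n^{-3d/2}$ relative to the exponential growth $|\mS|^n$. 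Combined with $s_n \sim c\, n^{-d/2}|\mS|^n$ this yields $e_n = O(n^{-3d/2}|\mS|^n)$.

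I would present this either (i) directly via the smooth-point ACSV formula of Pemantle--Raichev--Wilson, carefully tracking that the numerator $H$ evaluated at the critical point is zero and that its first non-vanishing contribution in the asymptotic expansion of the Cauchy integral comes with the stated order, or (ii) more cleanly, by a symmetry/reflection argument: the excursion generating function equals an alternating sum over the $2^d$ sign-flip images of the endpoint-summed function, and each reflection introduces a cancellation that costs $n^{-1/2}$. Option (ii) is attractive because it reuses Theorem~\ref{thm:asm} almost verbatim; one writes $e_n$ as a signed combination of quantities each asymptotic to $c_\varepsilon n^{-d/2}|\mS|^n$, shows the leading $n^{-d/2}$ terms cancel in groups due to the axis symmetries, and iterates the cancellation once per coordinate. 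Since the statement only claims an upper bound (big-$O$, not an asymptotic equivalent), I do not need to prove the cancellations are not more severe, which simplifies matters considerably.

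The main obstacle is making the ``odd linear factor costs $n^{-1/2}$'' claim rigorous without an error-prone explicit evaluation: in the ACSV framework one must verify that the minimal critical point is the \emph{only} contributing point (or that others contribute no larger), that the relevant Hessian is nondegenerate (already known from Theorem~\ref{thm:asm}), and that the leading term of the asymptotic expansion of the multivariate Cauchy integral with a numerator vanishing to order one in each variable indeed has order $n^{-(3d)/2}$ rather than something larger by accidental cancellation — but again, because only an upper bound is needed, it suffices to bound the integral crudely by pulling out $\sup|\prod_k y_k|$ over the region where the Gaussian is non-negligible, namely $|y_k| = O(n^{-1/2})$, which immediately gives the extra $n^{-d/2}\cdot(n^{-1/2})^d$ saving. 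I would therefore organize the proof around this crude bound, invoking the transfer theorems of~\cite{PeWi02, RaWi08} only to justify localizing the integral near the critical point.
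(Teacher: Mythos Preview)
Your overall strategy matches the paper's almost exactly: express $e_n$ as the diagonal of a rational function whose numerator is $\prod_k(z_k^2-1)$ (equivalently $\prod_k(z_k-\oz_k)$), observe that the denominator is unchanged from the walks-ending-anywhere problem so Proposition~\ref{prop:minpt} still classifies the minimal points, note the numerator vanishes at every such point, and then read off the order from the smooth-point expansion. The paper does precisely this, obtaining the diagonal expression via the Gessel--Zeilberger formula with $\mathbf{a}=\mathbf{b}=\mathbf{1}$ and then citing the Raichev--Wilson expansion~\eqref{eqn:lower}.

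Where your write-up goes wrong is in the quantitative step. You assert that ``each vanishing factor contributes an extra power of $n^{-1/2}$'' and propose the crude bound $\sup|\prod_k y_k|=O(n^{-d/2})$ over the region $|y_k|=O(n^{-1/2})$; both of these only yield $e_n=O(|\mS|^n/n^{d})$, not $|\mS|^n/n^{3d/2}$. (Your displayed arithmetic ``$n^{-d/2}\cdot n^{-d}=n^{-3d/2}$'' is the right target but is inconsistent with ``$d$ further half-powers''.) The missing $n^{-d/2}$ comes from parity: in local coordinates $z_k=e^{i\theta_k}$ one has $(z_k-\oz_k)=2i\sin\theta_k$, which is \emph{odd} in $\theta_k$, while the phase $\tilde f$ (built from $S$) and $g$ are \emph{even} in each $\theta_k$ by the axis symmetry of $\mS$. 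Hence the leading term $\prod_k\theta_k$ integrates to zero against the Gaussian, and in fact every term of the expansion that is odd in some $\theta_k$ vanishes; the first surviving contribution comes from the even part, which begins at $\prod_k\theta_k^2$ and thus has order $2d$, giving $n^{-d/2}\cdot n^{-d}=n^{-3d/2}$. Equivalently, in the formula for $L_k$ the operator $\mathcal{D}$ is a sum of pure second derivatives $\partial_{\theta_j}^2$ (because the Hessian is diagonal), so it preserves parity and kills the odd leading part of $\tilde u$; one checks $L_0=\cdots=L_{d-1}=0$. This parity mechanism is what the paper's invocation of~\eqref{eqn:lower} is implicitly using, and it is not captured by your crude sup-bound. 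Your option~(i) would work if carried out; your option~(ii) and the crude bound do not reach the stated exponent without this extra ingredient.
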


\subsection{Organization of the paper}
The article is organized as follows.  Section~\ref{sec:lwalks}
describes how to express the generating function using an orbit sum by
applying the kernel method, following the strategy described
in~\cite{BoMi10}.  We then derive Equation~\eqref{eq:rat}, which
describes the generating function as the diagonal of a rational power
series in multiple variables.  Section~\ref{sec:acsv} justifies why
the work of Pemantle and Wilson~\cite{PeWi13} is applicable, with the
asymptotic results computed in Section~\ref{sec:results}.  We discuss
the sub-dominant growth, and compute an example in
Section~\ref{sec:lot}. Section~\ref{sec:tele} discusses the
differential equations satisfied by these generating functions, and
how to use creative telescoping techniques to find them.  We tabulate
some small examples. We conclude with a discussion of how these walks
fit into the context of walks in Weyl chambers, which allows us to
obtain results on the asymptotics of walk excursions, and also to
consider other families of walks.

\begin{figure}
\centering
\tikzstyle{vspecies}=[align=center,rectangle, minimum size=0.5cm,text width = 19em,draw=black,fill=white, rounded corners,thick]
\begin{tikzpicture}[auto, outer sep=1pt, node distance=1.5cm]
\node [vspecies] (B) {Determine functional equation for $F(\bz,t)$} ;
\node [vspecies, below = 0.32cm of B] (C) {\FunList{}{Represent $F(\bz,t)$ as the}{positive part of rational $R(\bz,t)$}} ;
\node [vspecies, below of = C] (D) {\FunList{}{Convert to a diagonal}{extraction of $G(\by,t)/H(\by,t)$}} ;
\node [vspecies, below = 0.32cm of D] (E) {Find the critical points of $\mathbb{V}(H)$};
\node [vspecies, below = 0.32cm of E] (F) {Refine to minimal points of $\mathbb{V}(H)$};
\node [vspecies, below = 0.32cm of F] (G) {\FunList{}{Find asymptotics
    via formulas}{of Pemantle and Wilson~\cite{PeWi13}}} ;
\draw [<-,thick] (C) --  node {} (B) ;
\draw [<-,thick] (D) --  node {} (C) ;
\draw [<-,thick] (E) --  node {} (D) ;
\draw [<-,thick] (F) --  node {} (E) ;
\draw [->,thick] (F) --  node {} (G) ;
\end{tikzpicture}
\caption{The strategem of determining asymptotics via the generalized
  kernel method for symmetric walks. }
\end{figure}
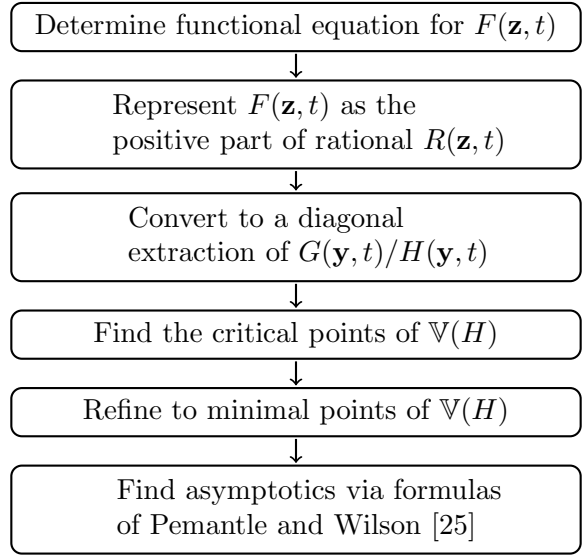

\section{Deriving a diagonal expression for the generating function}
\label{sec:lwalks}
Fix a dimension $d$ and a highly symmetric set of steps~$\mS \subseteq
\{\pm1,0\}^d \setminus \{\mathbf{0}\}$. Recall this means that
$(i_1,\dots,i_k,\dots,i_d) \in \mS$ implies
$(i_1,\dots,-i_k,\dots,i_d) \in \mS$. In this section we derive a
functional equation for a multivariate generating function, apply the
orbit sum method to derive a closed expression related to this
generating function, and conclude by writing the univariate counting
generating function for the number of walks as the complete diagonal
of a rational function.

The following notation is used throughout: 
\[ \oz_i= z_i^{-1}; \qquad \bz = (z_1, \dots, z_d); \qquad \bi =( i_1,i_2,\dots,i_d)\in \mathbb{Z}^d; \qquad \bz^{\bi} = z_1^{i_1}\cdots z_d^{i_d}, \]
and we write $\mathbb{Q}[z_k,\overline{z_k}]$ to refer to the ring of Laurent polynomials in the variable $z_k$.

\subsection{A functional equation}
To begin, we define the generating function:
\begin{equation}
  F(\bz,t)= \sum_{\substack{n \geq 0 \\ \bi \in \mathbb{Z}^d}} s_{\bi}(n) \bz^{\bi} t^n 
  = \sum_{n \geq 0} \left(\sum_{\bi \in \mathbb{Z}^d}s_{\bi}(n) z_1^{i_1}\cdots z_d^{i_d}\right) t^n \in \mathbb{Q}[z_1,\oz_1,\dots,z_d,\oz_d][\![t]\!], 
\end{equation}
where $s_{\bi}(n)$ counts the number of walks of length $n$ taking
steps from $\mS$ which stay in the positive orthant and end at lattice
point $\bi \in \mathbb{Z}^d$.  Note that the series $F(\bo,t)$ is the generating
function for the total number of walks in the orthant, and we can recover the series 
for walks ending on the hyperplane $z_k=0$ by setting $z_k=0$ in the
series~$F(\bz,t)$ (the variables
$z_1,\dots,z_d$ are referred to as \emph{catalytic} variables in the
literature, as they are present during the analysis and removed at the
end of the `reaction' via specialization to 1).  We also define the
function (known as either the \emph{characteristic polynomial} or the
\emph{inventory} of $\mS$) by
\begin{equation} 
S(\bz)= \sum_{\bi \in \mS} \bz^{\bi} = [t^1] F(\bz,t) \in \mathbb{Q}\left[z_1,\oz_1,\dots,z_d,\oz_d\right]. 
\end{equation}
In many recent analyses of lattice walks, functional equations are
derived by translating the following description of a walk into a
generating function equation: a walk is either an empty walk, or a
shorter walk followed by a single step. To ensure the condition that
the walks remain in the positive orthant, we must not count walks that
add a step with a negative $k$-th component to a walk ending on
the hyperplane $z_k=0$. To account for this, it is sufficient to subtract
an appropriate multiple of $F$ from the functional equation: 
$t\oz_k F(z_1, \dots, z_{k-1}, 0, z_{k+1},\dots, z_d, t)$, 
however if a given step has several negative
components we must use inclusion and
exclusion to prevent over compensation.
 
This can be made explicit. Let $\mS \subseteq
  \{1,0,-1\}^d$ define a $d$-dimensional lattice model restricted to the
  first orthant, and let~$F(\bz,t)$ be the generating function for
  this model, counting the number of
  walks of length $n$ with marked endpoint. Let
  $V=\{1, \dots, d\}$, so that it is the set of coordinates $j$ for which
  there is at least one step in $\mS$ with $-1$ in the $j$-th
  coordinate (this is the full set of indices by our assumptions). 
  Then, by translating the combinatorial recurrence described above,
 we see that $F(\bz,t)$ satisfies the functional equation
\begin{align}
\begin{split}
(z_1\cdots z_d)F(\bz,t) = (z_1\cdots z_d) &+ t(z_1\cdots z_d)S(\bz) F(\bz,t)\\
& - t\sum_{V'\subseteq V} (-1)^{|V'|} \left[(z_1\cdots z_d)S(\bz) F(\bz,t)\right]_{\{z_j=0: j\in V'\}}.
\end{split}
\end{align}
Basic manipulations then give the following result.

\begin{lemma}
Let $F(\bz,t)$ be the multivariate generating function described above.  Then 
\begin{equation} 
(z_1\cdots z_d)\left(1-tS(\bz)\right)F(\bz,t) = (z_1\cdots z_d) + \sum_{k=1}^d A_k(z_1,\dots,z_{k-1},z_{k+1}\dots,z_d,t), \label{eq:kernel}
\end{equation}
for some $A_k \in \mathbb{Q}[z_1,\dots,z_{k-1},z_{k+1},\dots,z_d][\![t]\!]$.
\end{lemma}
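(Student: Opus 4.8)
The plan is to start from the explicit functional equation displayed just above the lemma and isolate the term $(z_1\cdots z_d)(1-tS(\bz))F(\bz,t)$. Moving the product term $t(z_1\cdots z_d)S(\bz)F(\bz,t)$ to the left side already produces $(z_1\cdots z_d)(1-tS(\bz))F(\bz,t)$ on the left and $(z_1\cdots z_d)$ together with the inclusion-exclusion sum
\[
- t\sum_{V'\subseteq V} (-1)^{|V'|} \left[(z_1\cdots z_d)S(\bz) F(\bz,t)\right]_{\{z_j=0:\, j\in V'\}}
\]
on the right. So the real content of the lemma is showing that this inclusion-exclusion sum can be regrouped into $d$ pieces $A_k$, where $A_k$ is a power series in $t$ whose coefficients are Laurent polynomials \emph{not involving $z_k$}.

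The key step is to group the subsets $V'\subseteq V$ by their minimum element (say, discarding the empty set, which contributes the term with no specialization — this needs to be tracked, but note $[(z_1\cdots z_d)S(\bz)F]_{\emptyset}$ is just $(z_1\cdots z_d)S(\bz)F$, which combines with the product term; I would double-check the bookkeeping here so that the "$1-tS$" really comes out clean). For each nonempty $V'$, let $k=\min V'$; then the specialization $\{z_j = 0 : j\in V'\}$ in particular sets $z_k=0$, so the resulting Laurent polynomial coefficients are free of $z_k$. Collecting $A_k := -t\sum_{V':\,\min V' = k}(-1)^{|V'|}[(z_1\cdots z_d)S(\bz)F(\bz,t)]_{\{z_j=0:\, j\in V'\}}$ gives a sum over $k=1,\dots,d$ that reproduces the full inclusion-exclusion sum, and by construction $A_k \in \mathbb{Q}[z_1,\dots,z_{k-1},z_{k+1},\dots,z_d][\![t]\!]$. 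One should check that each such specialization is well-defined — i.e., that $[(z_1\cdots z_d)S(\bz)F(\bz,t)]$ has no negative powers of $z_k$ before setting $z_k=0$ — which follows because $S(\bz)$ has powers in $\{-1,0,1\}$ and $F$ has only nonnegative powers of each $z_j$ (walks stay in the orthant), so multiplying by $z_1\cdots z_d$ clears the single negative power.

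The main obstacle, such as it is, is purely organizational rather than mathematical: correctly matching up the $V'=\emptyset$ term with the $tS(\bz)F$ term so that the kernel factor $(1-tS(\bz))$ appears exactly as stated, and verifying that the $\min$-grouping partitions the nonempty subsets of $V$ cleanly so that no term is double counted or dropped. There is also a minor point that $F(\bz,t)$ and its specializations lie in the appropriate formal power series ring in $t$ with Laurent-polynomial (resp. polynomial) coefficients, so that all manipulations — moving terms across the equation, extracting coefficients of $z_k$, summing over subsets — are legitimate term by term in $t$; this is immediate since each $[t^n]F$ is a genuine Laurent polynomial (a finite sum over endpoints reachable in $n$ steps). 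No convergence or analytic input is needed, so "basic manipulations" is an accurate description and the proof is short.
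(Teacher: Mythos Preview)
Your proposal is correct and is exactly the kind of ``basic manipulation'' the paper alludes to without writing out; since the paper gives no explicit argument, there is nothing substantively different to compare. Your grouping of the nonempty subsets $V'$ by $\min V'$ is a clean and valid way to produce the $A_k$, and your check that $(z_1\cdots z_d)S(\bz)F(\bz,t)$ is a genuine polynomial in each $z_j$ (so that all the specializations $z_j=0$ are well defined and the resulting $A_k$ lie in $\mathbb{Q}[z_1,\dots,z_{k-1},z_{k+1},\dots,z_d][\![t]\!]$, not merely a Laurent ring) is the only point that needed saying.

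Your instinct to double-check the $V'=\emptyset$ bookkeeping is well placed: as written, the displayed functional equation in the paper is slightly ambiguous about whether the empty set is included in the sum (and if it is, the $t(z_1\cdots z_d)S(\bz)F$ term appears twice with opposite effect). The combinatorial derivation makes clear that the inclusion--exclusion correction runs over \emph{nonempty} $V'$, and with that reading your rearrangement goes through exactly as you describe. Since the lemma only asserts the existence of some $A_k$ with the stated dependence on variables, any residual sign discrepancy in the displayed equation is absorbed and does not affect the conclusion.
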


\begin{example} Set $\mS=\{e_1, -e_1, \dots, e_d, -e_d\}$. In this
  case $S(\bz)=\sum_{j=1}^d (z_j+\oz_j)$, so $(z_1\cdots z_d)S(\bz)$ vanishes
  when at least two of the $z_j$ are zero, and the generating function satisfies
 \[(z_1\cdots z_d)\left(1-tS(\bz)\right)F(\bz,t) = (z_1\cdots z_d) + \sum_{k=1}^d t(z_1\dots z_{k-1} z_{k+1} \dots z_d) F(z_1, \dots, z_{j-1}, 0, z_{j+1}\dots, z_d).\]
\end{example}

\subsection{The Orbit Sum Method}
\label{sec:orbit}
The orbit sum method, when it applies, has three main steps: find a
suitable group~$\mathcal{G}$ of rational maps; apply the elements of
the group to the functional equation and form a telescoping sum; and
(ultimately) represent the generating function of a model as the
positive series extraction of an explicit rational function.
Bousquet-M{\'e}lou and Mishna~\mbox{\cite{BoMi10}} illustrate the applicability in the case of
lattice walks, and it has been adapted to several dimensions~\cite{BoBoKaMe14}.

\subsubsection{The group $\mathcal{G}$}
For any $d$-dimensional model,  we define the group $\mathcal{G}$ of $2^d$ rational maps by
\begin{equation}
  \mathcal{G} := \left\{(z_1,\dots,z_d) \mapsto (z_1^{i_1},\dots,z_d^{i_d}) : (i_1,\dots,i_d) \in \{-1,1\}^d \right\}.
\end{equation} 
Given $\sigma \in \mathcal{G}$, we can consider $\sigma$ as a map on
$\mathbb{Q}[z_1,\oz_1,\dots,z_d,\oz_d][\![t]\!]$ through the group action
defined by \mbox{$\sigma\left(A(\bz,t)\right) :=
  A\left(\sigma(\bz),t\right)$}.  Due to the symmetry of the step set
across each axis, one can verify that
$\sigma(S(\bz))=S(\sigma(\bz))=S(\bz)$ always holds.  The fact that
this group does not depend on the step set of the model -- only on the
dimension $d$ -- is crucial to obtaining the general results here.
When $d$ equals two, the group $\mathcal{G}$ matches the group used
by~\cite{FaIaMa99} and~\cite{BoMi10}. As we will see in Section~\ref{sec:Weyl}, 
$\mathcal{G}$ corresponds to the Weyl group of the Weyl chamber $A_1^d$, where the step set
$\mS$ can be studied in the context of Gessel and Zeilberger~\cite{GeZe92}.

\subsubsection{A telescoping sum}
Next we apply each of the $2^d$ elements of $\mathcal{G}$ to
Equation~\eqref{eq:kernel}, and take a weighted
sum.  Define~$\sgn(\sigma)=(-1)^r$, where $r = \# \{k : \sigma(z_k)=\oz_k\}$, and let $\sigma_k$ 
be the map which sends $z_k$ to $\oz_k$ and fixes all other components of $(z_1,\dots,z_d)$.

\begin{lemma} 
  Let $F(\bz,t)$ be the generating function counting the number of
  walks of length $n$ with marked endpoint.  Then, as elements of the
  ring $\mathbb{Q}[z_1,\oz_1,\dots,z_d,\oz_d][\![t]\!]$,
\begin{equation} 
  \sum_{\sigma \in \mathcal{G}}\sgn(\sigma)\cdot \sigma(z_1\cdots z_d) \sigma(F(\bz,t)) = \frac{\sum_{\sigma \in \mathcal{G}}\sgn(\sigma)\cdot \sigma(z_1\cdots z_d)}{1-tS(\bz) }. \label{eq:orbit}
\end{equation}
\end{lemma}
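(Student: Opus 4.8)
The plan is to apply the $2^d$ maps in $\mathcal{G}$ to the kernel equation~\eqref{eq:kernel}, take the signed sum, and observe that every ``boundary'' term $A_k$ cancels in pairs while the factor $1-tS(\bz)$ survives. First I would fix $\sigma\in\mathcal{G}$ and apply it to both sides of~\eqref{eq:kernel}. Since $\sigma$ acts as a ring automorphism of $\mathbb{Q}[z_1,\oz_1,\dots,z_d,\oz_d][\![t]\!]$ fixing $t$, and since $\sigma(S(\bz))=S(\bz)$ by the symmetry of the step set across each axis, this gives
\[
  \sigma(z_1\cdots z_d)\,(1-tS(\bz))\,\sigma(F(\bz,t)) \;=\; \sigma(z_1\cdots z_d) \;+\; \sum_{k=1}^d \sigma\!\left(A_k\right),
\]
where $\sigma(A_k)$ denotes the image of $A_k(z_1,\dots,z_{k-1},z_{k+1},\dots,z_d,t)$ under $\sigma$, a Laurent series not involving $z_k$. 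Multiplying by $\sgn(\sigma)$, summing over all $\sigma\in\mathcal{G}$, and pulling the common factor $(1-tS(\bz))$ out of the left-hand side yields
\[
  (1-tS(\bz))\sum_{\sigma\in\mathcal{G}}\sgn(\sigma)\,\sigma(z_1\cdots z_d)\,\sigma(F(\bz,t))
  \;=\; \sum_{\sigma\in\mathcal{G}}\sgn(\sigma)\,\sigma(z_1\cdots z_d) \;+\; \sum_{k=1}^d \sum_{\sigma\in\mathcal{G}}\sgn(\sigma)\,\sigma(A_k).
\]

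The crux is to show $\sum_{\sigma\in\mathcal{G}}\sgn(\sigma)\,\sigma(A_k)=0$ for each fixed $k$. Here I would use the fixed-point-free involution $\sigma\mapsto\sigma\sigma_k$ on $\mathcal{G}$ (the group is abelian, isomorphic to $(\mathbb{Z}/2\mathbb{Z})^d$, so the order of composition is immaterial). Because $A_k$ does not involve $z_k$, the maps $\sigma$ and $\sigma\sigma_k$ agree on every variable appearing in $A_k$, so $\sigma(A_k) = (\sigma\sigma_k)(A_k)$; on the other hand $\sgn(\sigma\sigma_k) = -\sgn(\sigma)$, since composing with $\sigma_k$ toggles precisely whether the $k$-th coordinate is inverted and leaves the others untouched, changing $r=\#\{j:\sigma(z_j)=\oz_j\}$ by exactly one. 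Hence the $2^d$ summands split into $2^{d-1}$ mutually cancelling pairs. (Note that $\sum_{\sigma}\sgn(\sigma)\,\sigma(z_1\cdots z_d)$ is not killed by this involution, since there $\sigma_k$ genuinely changes the monomial, so nothing is lost on the right.)

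Finally I would divide through by $1-tS(\bz)$. This is legitimate in $\mathbb{Q}[z_1,\oz_1,\dots,z_d,\oz_d][\![t]\!]$ because every $\bi\in\mS$ is nonzero, so $tS(\bz)$ has no constant term in $t$ and $1-tS(\bz)$ is a unit with inverse $\sum_{n\ge0}t^nS(\bz)^n$; this produces exactly~\eqref{eq:orbit}. I expect the only genuine obstacle to be the bookkeeping in the cancellation step — specifically, confirming that each $A_k$ really omits the variable $z_k$ (which is exactly the content of Lemma~\ref{eq:kernel}'s conclusion) so that the involution fixes $\sigma(A_k)$ while flipping its sign; the remaining manipulations are purely formal within the power-series ring.
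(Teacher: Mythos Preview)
Your proof is correct and follows essentially the same approach as the paper: apply each $\sigma\in\mathcal{G}$ to the kernel equation, take the signed sum, and cancel each $A_k$ via the involution $\sigma\mapsto\sigma_k\sigma$ (which fixes $\sigma(A_k)$ but flips the sign), then divide by $1-tS(\bz)$. You are somewhat more explicit than the paper about why $1-tS(\bz)$ is a unit in the power-series ring, which the paper subsumes under ``minor algebraic manipulations.''
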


\begin{proof}
For each $\sigma \in \mathcal{G}$ we have $\sgn(\sigma) =
-\sgn(\sigma_k\sigma)$ and, for the $A_k$ in Equation~\eqref{eq:kernel},
\begin{equation*} 
  \sigma(A_k(z_1,\dots,z_{k-1},z_{k+1}\dots,z_d,t)) =
  (\sigma_k\sigma)(A_k(z_1,\dots,z_{k-1},z_{k+1}\dots,z_d,t)). 
\end{equation*}
Thus, we can apply each $\sigma \in \mathcal{G}$ to Equation
\eqref{eq:kernel} and sum the results, weighted by $\sgn(\sigma)$, to cancel
each $A_k$ term on the right hand side.  Minor algebraic manipulations, along with the
fact that the group elements fix $S(z_1,\dots,z_d)$, then give
Equation \eqref{eq:orbit}.
\end{proof}

\subsubsection{Positive series extraction}
Next, we note that each term in the expansion of
\[ \sigma_1(z_1,\dots,z_d) \sigma_1(F(\bz,t)) = - (\oz_1z_2\cdots z_d) F(\oz_1,z_2,\dots,z_d,t) \in \mathbb{Q}[z_1,\oz_1,\dots,z_d,\oz_d][\![t]\!]\]
 has a negative power of $z_1$. In fact, except for when $\sigma$ is the identity any summand $\sigma(z_1\cdots z_d) \sigma(F(\bz,t))$ on the left hand side of Equation \eqref{eq:orbit} contains a negative power of at least one variable in any term of its expansion.  

 With this in mind, for an element $A(\bz,t) \in \mathbb{Q}[z_1,\oz_1,\dots,z_d,\oz_d][\![t]\!]$ we let  
$[z_k^{\geq}]A(\bz,t)$ denote the sum of all terms of $A(\bz,t)$ which contain only non-negative powers of $z_k$. 
Lemma~\ref{thm:pospart} then follows from the identity
\[ \sum_{\sigma \in \mathcal{G}}\sgn(\sigma)\cdot \sigma(z_1\cdots z_d) = (z_1-\oz_1)\cdots(z_d-\oz_d), \]
 which can be proven by induction. 

\begin{lemma} \label{thm:pospart}
Let $F(\bz,t)$ be the generating function counting the number of walks of length~$n$ with marked endpoint.  Then
\begin{equation}
F(\bz,t) = [z_1^{\geq}]\cdots[z_d^{\geq}]R(\bz,t), \label{eq:pospart}
\end{equation}
where
\begin{equation*}
R(\bz,t) = \frac{(z_1-\oz_1)\cdots(z_d-\oz_d)}{(z_1\cdots z_d)(1-tS(\bz)) }.
\end{equation*}
\end{lemma}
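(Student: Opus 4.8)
The plan is to take the orbit-sum identity of Lemma~\ref{thm:orbit} (Equation~\eqref{eq:orbit}) and extract the ``positive part'' in each of the variables $z_1,\dots,z_d$ simultaneously. First I would rewrite the numerator of the right-hand side of~\eqref{eq:orbit} using the factorization
\[ \sum_{\sigma \in \mathcal{G}}\sgn(\sigma)\cdot \sigma(z_1\cdots z_d) = (z_1-\oz_1)\cdots(z_d-\oz_d), \]
whose proof is a routine induction on $d$: the case $d=1$ reads $z_1 - \oz_1$, and the product structure of $\mathcal{G} = \{-1,1\}^d$ together with multiplicativity of $\sgn$ lets one factor the sum over $\mathcal{G}$ as a product of $d$ sums of the form $z_k - \oz_k$. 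This identifies the right-hand side of~\eqref{eq:orbit} with $R(\bz,t)$ as defined in the statement.

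Next I would apply the operator $[z_1^{\geq}]\cdots[z_d^{\geq}]$ to both sides of~\eqref{eq:orbit}. On the left-hand side, the sum ranges over $\sigma \in \mathcal{G}$; the key observation, already noted in the excerpt just above the statement, is that for every non-identity $\sigma$ the summand $\sigma(z_1\cdots z_d)\,\sigma(F(\bz,t))$ has, in every monomial of its series expansion, a strictly negative exponent in at least one variable $z_k$ — namely, in any variable $z_k$ that $\sigma$ inverts. This is because $F(\bz,t)$ has only non-negative powers of each $z_k$ (walks stay in the orthant), so $\sigma(F)$ has only non-positive powers in each inverted variable, and multiplying by $\sigma(z_1\cdots z_d)$ shifts the exponent in such a variable down by one, keeping it strictly negative. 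Hence $[z_k^{\geq}]$ annihilates every non-identity summand, and since the operators $[z_1^{\geq}],\dots,[z_d^{\geq}]$ commute, the full composite operator kills all of them. The identity summand contributes exactly $(z_1\cdots z_d)\,F(\bz,t)$; wait — more precisely, for $\sigma = \mathrm{id}$ we have $\sgn(\sigma)=1$ and $\sigma(z_1\cdots z_d)\sigma(F) = (z_1\cdots z_d)F(\bz,t)$, which already has only non-negative powers of each $z_k$ (here I use that $F$ has non-negative powers and multiplying by $z_1\cdots z_d$ only raises exponents), so it is fixed by the composite operator. But the stated conclusion is $F(\bz,t) = [z_1^{\geq}]\cdots[z_d^{\geq}]R(\bz,t)$, not $(z_1\cdots z_d)F$; so I would instead argue that $F$ itself, having only non-negative powers of each variable, satisfies $[z_1^{\geq}]\cdots[z_d^{\geq}]F = F$, and that $R(\bz,t) = (z_1-\oz_1)\cdots(z_d-\oz_d)/\big((z_1\cdots z_d)(1-tS(\bz))\big)$ equals $\sum_{\sigma}\sgn(\sigma)\,\sigma(\bz^{-\bo})\,\sigma(\dots)$ — the cleanest route is to divide~\eqref{eq:orbit} through by $z_1\cdots z_d$ first, so that the identity summand becomes exactly $F(\bz,t)$ and every other summand still has a strictly negative power of some variable, then apply $[z_1^{\geq}]\cdots[z_d^{\geq}]$.

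I expect the main obstacle to be purely bookkeeping: making precise that $[z_k^{\geq}]$ is well-defined and continuous on $\mathbb{Q}[z_1,\oz_1,\dots,z_d,\oz_d][\![t]\!]$ (so that it may be applied term-by-term to the infinite sum defining the left side), that the operators for distinct $k$ commute, and that after dividing~\eqref{eq:orbit} by $z_1\cdots z_d$ each non-identity summand genuinely retains a strictly negative exponent in every monomial — one must check that dividing by $z_1\cdots z_d$ does not accidentally clear the negative exponent, which it does not, since it lowers every exponent by one and the offending exponent was already $\leq -1$ before division, hence $\leq -2$ after, wait, no: before dividing, $\sigma(z_1\cdots z_d)\sigma(F)$ has exponent $\leq -1$ in the inverted variable; after dividing by $z_1\cdots z_d$ it has exponent $\leq -2$, still strictly negative, so $[z_k^{\geq}]$ still annihilates it. With these routine verifications in place, the identity~\eqref{eq:pospart} follows immediately. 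None of these steps is deep; the content is entirely in the orbit-sum cancellation already established in Lemma~\ref{thm:orbit}.
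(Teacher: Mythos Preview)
Your proposal is correct and follows essentially the same approach as the paper: apply the positive-part operator to the orbit-sum identity~\eqref{eq:orbit}, use the factorization $\sum_{\sigma}\sgn(\sigma)\sigma(z_1\cdots z_d)=(z_1-\oz_1)\cdots(z_d-\oz_d)$ (proven by induction), and observe that every non-identity summand on the left is annihilated because it carries a strictly negative exponent in some $z_k$. You are in fact slightly more careful than the paper in flagging the need to divide through by $z_1\cdots z_d$ before extracting the positive part, so that the identity term becomes exactly $F(\bz,t)$ and the right-hand side becomes $R(\bz,t)$ as stated.
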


Since the class of D-finite functions is closed under positive series extraction -- as shown in \cite{Li89} -- an immediate consequence is the following. 
\begin{cor}\label{cor:Dfinite}
Under the above conditions on \mS, the generating functions $F(\bz, t)$ and (thus)
$F(\bo, t)$ are D-finite functions. 
\end{cor}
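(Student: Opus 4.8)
The plan is to read off the corollary directly from Lemma~\ref{thm:pospart} together with Lipshitz's closure results for D-finite power series~\cite{Li89}. First I would observe that
\[
  R(\bz,t) = \frac{(z_1-\oz_1)\cdots(z_d-\oz_d)}{(z_1\cdots z_d)\left(1-tS(\bz)\right)}
\]
is a rational function in the $d+1$ variables $z_1,\dots,z_d,t$, and hence is D-finite; expanding $1/(1-tS(\bz)) = \sum_{n\geq 0} S(\bz)^n t^n$ realizes it concretely as an element of $\mathbb{Q}[z_1,\oz_1,\dots,z_d,\oz_d][\![t]\!]$.

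Next I would peel off the non-negative-part operators one variable at a time. By~\cite{Li89}, for any D-finite $A(\bz,t)$ in $\mathbb{Q}[z_1,\oz_1,\dots,z_d,\oz_d][\![t]\!]$ the series $[z_k^{\geq}]A(\bz,t)$ is again D-finite, and it still lies in this Laurent-series ring, so the operators $[z_1^{\geq}],\dots,[z_d^{\geq}]$ may be applied in succession. Doing so to $R(\bz,t)$ and invoking Equation~\eqref{eq:pospart} shows that $F(\bz,t)$ is D-finite. Finally, setting $z_k = 1$ for $k = 1,\dots,d$ --- a specialization under which D-finiteness is preserved, again by~\cite{Li89} --- yields that $F(\bo,t)$ is a D-finite power series in the single variable $t$.

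I expect no genuine difficulty: the statement is a formal consequence of Lemma~\ref{thm:pospart}. The only point requiring attention is that the positive-part extraction is performed on an honest Laurent series --- the denominator factor $z_1\cdots z_d$ contributes negative exponents --- so one should make sure Lipshitz's theorem is invoked in (or routinely transported to) the Laurent setting, and that after each extraction the intermediate object still belongs to $\mathbb{Q}[z_1,\oz_1,\dots,z_d,\oz_d][\![t]\!]$ so that the next operator is well defined. The order in which the variables $z_k$ are treated is immaterial.
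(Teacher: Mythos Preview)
Your proposal is correct and follows essentially the same approach as the paper: the paper simply remarks, just before stating the corollary, that the class of D-finite functions is closed under positive series extraction by~\cite{Li89}, and deduces the result immediately from Lemma~\ref{thm:pospart}. Your version spells out slightly more detail (iterating the extractions one variable at a time, noting the Laurent-series setting, and invoking closure under specialization for $F(\bo,t)$), but the underlying argument is identical.
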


\subsection{The generating function as a diagonal}

Given an element 
\begin{equation*} 
B(\bz,t) = \sum_{n \geq 0}\left(\sum_{\bi \in \mathbb{Z}^d} b_{\bi}(n)z_1^{i_1}\cdots z_d^{i_d}\right)t^n  \in \mathbb{Q}[z_1,\oz_1,\dots,z_d,\oz_d][\![t]\!],
\end{equation*}
we let $\Delta$ denote the (complete) diagonal operator
\begin{equation*} 
\Delta B(\bz,t) := \sum_{n \geq 0} b_{n,\dots,n}(n) t^n.
\end{equation*}

There is a natural correspondence between the diagonal operator and extracting the positive part of a multivariate power series, as in Equation~\eqref{eq:pospart}.

\begin{prop} \label{prop:diag}
Let $B(\bz,t)$ be an element of $\mathbb{Q}[z_1,\oz_1,\dots,z_d,\oz_d][\![t]\!]$.  Then 
\begin{equation} \label{eq:postodiag}
[z_1^{\geq}]\cdots[z_d^{\geq}]B(\bz,t) \bigg|_{z_1=1,\dots,z_d=1} = \Delta \left(\frac{B\left(\oz_1,\dots,\oz_d,z_1\cdots z_d\cdot t\right)}{(1-z_1)\cdots(1-z_d)}\right).
\end{equation}
\end{prop}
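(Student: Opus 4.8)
The plan is to prove the identity by comparing coefficients on both sides, tracking exactly which monomials of $B$ survive each operation. Write $B(\bz,t) = \sum_{n,\bi} b_{\bi}(n)\,\bz^{\bi}t^n$ and follow a single monomial $b_{\bi}(n)\,\bz^{\bi}t^n$ through both sides of \eqref{eq:postodiag}. On the left, applying $[z_1^\geq]\cdots[z_d^\geq]$ kills this monomial unless every component $i_k \geq 0$; when it survives, setting $z_1=\cdots=z_d=1$ contributes $b_{\bi}(n)$ to the coefficient of $t^n$. Hence the left-hand side equals $\sum_{n}\bigl(\sum_{\bi \in \mathbb{N}^d} b_{\bi}(n)\bigr) t^n$.

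On the right, I would first substitute $z_k \mapsto \oz_k$ and $t \mapsto z_1\cdots z_d\cdot t$ into $B$, turning the monomial $b_{\bi}(n)\bz^{\bi}t^n$ into $b_{\bi}(n)\,\bz^{-\bi}(z_1\cdots z_d)^n t^n = b_{\bi}(n)\, z_1^{n-i_1}\cdots z_d^{n-i_d}\, t^n$, a genuine power series in $t$ with Laurent-polynomial coefficients. Then I multiply by $\prod_k (1-z_k)^{-1} = \prod_k \sum_{m_k \geq 0} z_k^{m_k}$, expanding as a power series in the $z_k$. The key step is the coefficient extraction defining $\Delta$: I need the coefficient of $z_1^n\cdots z_d^n t^n$ in the product. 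For a fixed $\bi$ and fixed $n$, this forces $n - i_k + m_k = n$ in each coordinate, i.e. $m_k = i_k$; since the $m_k$ must be non-negative this contributes exactly when $\bi \in \mathbb{N}^d$, and then contributes $b_{\bi}(n)$. Summing over $\bi$ and $n$ gives $\sum_n \bigl(\sum_{\bi \in \mathbb{N}^d} b_{\bi}(n)\bigr) t^n$, matching the left-hand side.

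The main obstacle is not any single computation but making the formal manipulations rigorous: one must check that the substitution $t \mapsto z_1\cdots z_d\cdot t$ and the subsequent multiplication by $\prod_k(1-z_k)^{-1}$ yield a well-defined element of $\mathbb{Q}[z_1,\oz_1,\dots,z_d,\oz_d][\![t]\!]$ on which $\Delta$ makes sense — that is, for each power of $t$ only finitely many monomials in the $z_k$ appear, so the diagonal coefficient is a finite sum. This holds because $B$ has, for each $n$, only finitely many nonzero $b_{\bi}(n)$ (the walk interpretation, or just membership in the Laurent polynomial ring, guarantees this), and multiplication by the geometric series in each $z_k$ only adds non-negative powers, so the coefficient of $t^n$ lies in $\mathbb{Q}[z_1,\dots,z_d,\oz_1,\dots,\oz_d]$ with bounded negative degrees. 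Once this bookkeeping is in place, both sides are seen to equal $\sum_n \bigl(\sum_{\bi \in \mathbb{N}^d} b_{\bi}(n)\bigr)t^n$ and the proof is complete. I would present it as a direct coefficient comparison rather than invoking any heavier machinery.
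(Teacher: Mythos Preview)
Your proposal is correct and follows essentially the same route as the paper: a direct coefficient comparison after expanding $B$, performing the substitution $z_k\mapsto\oz_k$, $t\mapsto z_1\cdots z_d\,t$, and multiplying by the geometric series $\prod_k\sum_{m_k\geq0}z_k^{m_k}$. One small slip: after multiplying by the geometric series the coefficient of $t^n$ is not a Laurent polynomial but a formal series with unbounded positive powers in each $z_k$; what matters, and what you correctly identify, is that the coefficient of $z_1^n\cdots z_d^n$ in it is still a finite sum because the negative degrees are bounded.
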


\begin{proof}
Suppose that $B$ has the expansion
\begin{equation*}
B(\bz,t) = \sum_{n \geq 0} \left( \sum_{\bi \in \mathbb{Z}^d} b_{\bi}(n)z_1^{i_1}\cdots z_d^{i_d} \right)t^n.
\end{equation*}
Then the right hand side of Equation~\eqref{eq:postodiag} is given by 
\begin{equation*}
\Delta \left(\sum_{k \geq 0} z_1^k \right)\cdots\left(\sum_{k \geq 0} z_d^k \right)
\left(\sum_{n \geq 0} \left( \sum_{\bi \in \mathbb{Z}^d} b_{\bi}(n) z_1^{n-i_1}\cdots z_d^{n-i_d} \right)t^n\right)
\end{equation*}
so that the coefficient of $t^n$ in the diagonal is the sum of all terms $b_{\bi}(n)$ with $i_1,\dots,i_d\geq 0$ (by assumption there are only finitely many which are non-zero).  But this is exactly the coefficient of $t^n$ on the left hand side.
\end{proof}

We note also that in the context of lattice path models with step set $\mS \subseteq \{\pm1,0\}^d \setminus \{\mathbf{0}\}$, the modified generating function $F\left(\oz_1,\dots,\oz_d, z_1\cdots z_d\cdot t\right)$ is actually a power series in the variables $z_1,\dots,z_d,t$ (as a walk cannot move farther on the integer lattice than its number of steps).  Combining Lemma~\ref{thm:pospart} and Proposition~\ref{prop:diag} implies that the generating function for the number of walks  can be represented as $F(\bo,t) = \Delta
\left(\frac{G(\bz,t)}{H(\bz,t)}\right)$, where
\begin{align}
\frac{G(\bz,t)}{H(\bz,t)} &= \frac{(1-z_1^2)\cdots(1-z_d^2)}{1 - t(z_1\cdots z_d)S(\bz)}\cdot \frac{1}{(1-z_1)\cdots(1-z_d)}\notag \\
&= \frac{(1+z_1)\cdots(1+z_d)}{1 - t(z_1\cdots z_d)S(\bz)}. \label{eq:rat}
\end{align}
To be precise, $G(\bz,t)$ and $H(\bz,t)$ are defined as the numerator and denominator of Equation~\eqref{eq:rat}.

\begin{example} For the walks defined by $\mS=\{e_1, -e_1, \dots, e_d, -e_d\}$, we have
\[ \frac{G(\bz,t)}{H(\bz,t)} = \frac{(1+z_1)\cdots(1+z_d)}{1 - t\sum_{k=1}^n (1+z_k^2)(z_1\cdots z_{k-1}z_{k+1}\cdots z_d)}.\] 
Note that this rational function is not unique, in the sense that there are other rational functions whose diagonals yield the same counting sequence. 
\end{example}

\subsection{The singular variety associated to the kernel}

Here, we pause to note that the \emph{combinatorial\/} symmetries of
the step sets that we consider affect the \emph{geometry} of the
variety of $H(\bz,t)$ -- called the \emph{singular variety}. This has
a direct impact on both the asymptotics of the counting sequence under
consideration and the ease with which its asymptotics are computed.
In particular, any factors of the form $(1-z_k)$ present in the
denominator of this rational function before simplification could have
given rise to non-simple poles and thus
made the singular variety non-smooth.  Although non-smooth varieties
can be handled in many cases -- see~\cite{PeWi13} -- having a smooth
singular variety is the easiest situation in which one can work in the
multivariate setting.  Understanding the interplay between
the step set symmetry and the singular variety geometry, and in the
process dealing with the non-smooth cases, is promising future work.

\section{Analytic combinatorics in several variables}
\label{sec:acsv}
Following the work of Pemantle and Wilson~\cite{PeWi02} and Raichev
and Wilson~\cite{RaWi08}, we can determine the dominant asymptotics
for the diagonal of the multivariate power series $
\frac{G(\bz,t)}{H(\bz,t)}$ by studying the variety (complex set of
zeroes) $\mV \subseteq \mathbb{C}^{d+1}$ of the denominator
\begin{equation*} H(\bz,t) = 1 - t\cdot(z_1\cdots z_d)S(\bz). \end{equation*} 
To begin, a particular set of singular points -- called the \emph{critical points} -- containing all singular points which \emph{could} affect the asymptotics of $\Delta (G/H)$ are computed in Section~\ref{sec:crtpts}. The set of critical points is then refined to those which determine the dominant asymptotics up to an exponential decay in Section~\ref{sec:minpts}; this refined set is called the set of \emph{minimal points} as they are the critical points which are `closest' to the origin in a sense made precise below.  The enumerative results come from calculating a Cauchy residue type integral, and after determining the minimal points we determine asymptotics in Section~\ref{sec:results} using pre-computed formulas for such integrals which can be found in \cite{PeWi13}.  In fact, up to polynomial decay there is only one singular point which determines dominant asymptotics for each model -- the point $\bp = (\bo,1/|\mS|)$ -- and this uniformity aids greatly in computing the quantities required in the analysis of a general step set, in order to obtain Theorem~\ref{thm:asm}.

We first verify our claim in the previous section that the variety
is smooth (that is, at every point on $\mV$ one of the partial
derivatives $H_{z_k}$ or $H_t$ does not vanish).  Indeed, any
non-smooth point on $\mV$ would have to satisfy both
\begin{align*}
1-t(z_1\cdots z_d)S(\bz) &= H = 0\\
\text{and} \quad -(z_1\cdots z_d)S(\bz) &= H_t = 0,
\end{align*}
which can never occur.  Equivalently, this shows that at each point in $\mV$ there exists a neighbourhood $N \subseteq \mathbb{C}^{d+1}$ such that $\mV \cap N$ is a complex submanifold of $N$.

\subsection{Critical points} 
\label{sec:crtpts}
The next step is to find the critical points.  Determined through
an appeal to stratified Morse theory, for a smooth variety the
critical points are precisely those which satisfy the following \emph{critical
  point equations}:
\begin{equation*}
H = 0, \quad tH_t = z_1H_{z_1},\quad tH_t = z_2H_{z_2},\quad \dots\quad   tH_t = z_dH_{z_d},
\end{equation*}
which we now solve.  Given $\bz \in \mathbb{C}^d$, define 
\[ \bz_{\hat{k}} := (z_1,\dots,z_{k-1},z_{k+1},\dots,z_d) \in \mathbb{C}^{d-1}. \] 
As each step in~$\mS$ has coordinates taking values in~$\{-1,0,1\}$, we may
collect the coefficients of the $k^\text{th}$ variable, and
use the symmetries present to write 
\begin{equation}
 S(\bz)= (\oz_k + z_k) S_{1}^{(k)}(\bz_{\hat{k}}) + S_{0}^{(k)}(\bz_{\hat{k}}), \label{eq:inv}
\end{equation}
which uniquely defines the Laurent polynomials $S_1^{(k)}(\bz_{\hat{k}}) $ and $S_0^{(k)}(\bz_{\hat{k}}) $.  With this notation the equation \mbox{$tH_t = z_kH_{z_k}$} becomes
\begin{equation*}
 t(z_1\cdots z_d)S(\bz) = t(z_1\cdots z_d)S(\bz) + t(z_1\cdots z_d)(z_kS_{z_k}(\bz)), 
 \end{equation*}
which implies
\begin{equation}
0 = t(z_1\cdots z_d)\cdot z_kS_{z_k}(\bz)= t\left( z_k^2 -1\right)(z_1\cdots z_{k-1}z_{k+1}\cdots z_d)S_1^{(k)}(\bz_{\hat{k}}).  \label{eq:CPeqn}
\end{equation}
Note that while $(z_1\cdots z_{k-1}z_{k+1}\cdots z_d)S_1^{(k)}(\bz_{\hat{k}})$ is a polynomial, 
$S_{1}^{(k)}(\bz_{\hat{k}})$ itself is a Laurent polynomial, so one must be careful when specializing variables to 0 in the expression.  This calculation characterizes the critical points of $\mV$.

\begin{prop} \label{prop:critpt}
The point $(\bz,t) = (z_1,\dots,z_d,t) \in \mV$ is a critical point of $\mV$ if and only if for each $1\leq k\leq d$ either:
\begin{enumerate}
\item $z_k= \pm1$ or,
\item the polynomial $(y_1\cdots y_{k-1}y_{k+1}\cdots y_d)S_1^{(k)}(\by_{\hat{k}})$ has a root at $\bz$.
\end{enumerate} 
\end{prop}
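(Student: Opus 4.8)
The plan is to solve the critical point equations directly using the factored form of $\partial_{z_k} S$ already derived in Equation~\eqref{eq:CPeqn}. First I would observe that, since the variety $\mV$ is smooth (as verified just above), the critical points are exactly the solutions of the system $H=0$, $tH_t = z_k H_{z_k}$ for $1 \le k \le d$. Because $H = 1 - t(z_1\cdots z_d)S(\bz)$, we have $tH_t = -t(z_1\cdots z_d)S(\bz)$, and a short computation of $z_k H_{z_k}$ (differentiating the product $z_1\cdots z_d$ by the product rule and using $H=0$) collapses the $k$-th equation to
\[
0 = t(z_1\cdots z_d)\cdot z_k S_{z_k}(\bz).
\]
Then I would plug in the symmetry-adapted decomposition $S(\bz) = (\oz_k + z_k)S_1^{(k)}(\bz_{\hat k}) + S_0^{(k)}(\bz_{\hat k})$ from Equation~\eqref{eq:inv}: since $S_0^{(k)}$ does not involve $z_k$, we get $z_k S_{z_k}(\bz) = z_k(1 - \oz_k^2)S_1^{(k)}(\bz_{\hat k}) = (z_k - \oz_k)S_1^{(k)}(\bz_{\hat k})$, and multiplying through by $z_1\cdots z_d$ turns this into the genuine polynomial $(z_k^2-1)(z_1\cdots z_{k-1}z_{k+1}\cdots z_d)S_1^{(k)}(\bz_{\hat k})$, exactly the right-hand side of Equation~\eqref{eq:CPeqn}.

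The core of the argument is then to read off when this product vanishes. Since $(\bz,t)\in\mV$ forces $t\neq 0$ (as $H=0$ means $t(z_1\cdots z_d)S(\bz)=1$), the factor $t$ can be dropped. The remaining product $(z_k^2-1)\cdot(z_1\cdots z_{k-1}z_{k+1}\cdots z_d)S_1^{(k)}(\bz_{\hat k})$ is zero precisely when either $z_k^2 = 1$, i.e.\ $z_k = \pm 1$ — which is case (1) — or when the polynomial $(y_1\cdots y_{k-1}y_{k+1}\cdots y_d)S_1^{(k)}(\by_{\hat k})$ vanishes at $\bz$ — which is case (2). Running this equivalence over all $k$ from $1$ to $d$, and noting that $H=0$ is the standing hypothesis encoded in ``$(\bz,t)\in\mV$'', yields the stated biconditional.

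The one genuinely delicate point — and the place I would be most careful — is the bookkeeping around Laurent versus honest polynomials: $S_1^{(k)}(\bz_{\hat k})$ is in general a Laurent polynomial, so ``$S_1^{(k)}(\bz_{\hat k}) = 0$ at $\bz$'' is not a well-posed condition when some coordinate of $\bz_{\hat k}$ is $0$, which is why the proposition is phrased in terms of the honest polynomial $(y_1\cdots y_{k-1}y_{k+1}\cdots y_d)S_1^{(k)}(\by_{\hat k})$. I would make explicit that clearing the denominators $z_1,\dots,z_d$ from Equation~\eqref{eq:CPeqn} is legitimate on all of $\mathbb{C}^{d+1}$ (it is a polynomial identity), so the vanishing locus one actually obtains is that of the cleared polynomial, and spell out that when no $z_j$ ($j\neq k$) is zero, case (2) reduces to $S_1^{(k)}(\bz_{\hat k}) = 0$, while the boundary behaviour is handled correctly by the polynomial formulation. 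Beyond this subtlety the proof is a direct unwinding of the critical point equations, so I do not anticipate any further obstacle.
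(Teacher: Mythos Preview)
Your proposal is correct and follows essentially the same approach as the paper: reduce each equation $tH_t=z_kH_{z_k}$ to the factored form in Equation~\eqref{eq:CPeqn}, observe that $t\neq0$ on $\mV$, and read off the two alternatives from the factorization. The paper's own proof is terse only because it performed these computations in the paragraphs immediately preceding the proposition; your write-up recapitulates them (and adds a useful remark on the Laurent-versus-polynomial issue), but the content is the same.
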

\begin{proof}
We have shown above that the critical point equations reduce to Equation~\eqref{eq:CPeqn}.  Furthermore, if $t$~were zero at a point on $\mV$ then $0=H(z_1,\dots,z_n,0)=1$, a contradiction.
\end{proof}

It is interesting to note that the polynomial $(y_1\cdots y_{k-1}y_{k+1}\cdots y_d)S_1^{(k)}(\by_{\hat{k}})$ has combinatorial signifigance, as the subset of $S(\bz)$ which encodes only the steps which move forwards in their $k^\text{th}$ coordinate.

\subsection{Minimal points} 
\label{sec:minpts}
Among the critical points, only those which are `closest' to the
origin will contribute to the asymptotics, up to an exponentially
decaying error.  This is analogous to the single variable case, where
the singularities of minimum modulus are those which contribute to the
dominant asymptotic term.  To be precise, for any point $(\bz,t)
\in\mathbb{C}^{d+1}$ we define the closed polydisk
\[ D(\bz,t) := \{(\mathbf{w},t') \in \mathbb{C}^{d+1} : |t'|\leq|t|
\text{ and } |w_j|\leq|z_j| \text{ for } j=1,\dots,d\}.\] The critical
point $(\mathbf{z},t)$ is called \emph{strictly minimal} if $D(\bz,t)
\cap \mV = \{(\bz,t)\}$, and \emph{finitely minimal} if the
intersection contains only a finite number of points, all of which are
on the boundary of $D(\bz,t)$.  Finally, we call a critical point
\emph{isolated} if there exists a neighbourhood of $\mathbb{C}^{d+1}$
where it is the only critical point.  In our case, we need only be
concerned with isolated finitely minimal points.

\begin{prop} \label{prop:minpt} 
The point $\bp = (\bo,1/|\mS|)$ is a
  finitely minimal point of the variety $\mV$.  Furthermore, any point
  in $D(\bp) \cap \mV$ is an isolated critical point.
\end{prop}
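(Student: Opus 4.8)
The plan is to analyze the set $D(\bp)\cap\mV$ head-on by means of the triangle inequality. The crucial preliminary observation is that after clearing the Laurent denominators the quantity $(z_1\cdots z_d)S(\bz)=\sum_{\bi\in\mS}\bz^{\bi+\bo}$ is an honest polynomial: since $\bi\in\{-1,0,1\}^d$, each exponent vector $\bi+\bo$ lies in $\{0,1,2\}^d$, so this is a sum of $|\mS|$ distinct monomials each having coefficient $1$ and non-negative exponents. First I would check that $\bp\in\mV$ (immediate, since $S(\bo)=|\mS|$), and then that every point of $D(\bp)\cap\mV$ in fact lies on the torus $\{\,|w_j|=1\text{ for all }j,\ |t'|=1/|\mS|\,\}$.

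For this, suppose $(\bw,t')\in D(\bp)\cap\mV$, so $|w_j|\le 1$, $|t'|\le 1/|\mS|$, and $t'(w_1\cdots w_d)S(\bw)=1$. Taking moduli and using $i_j+1\ge 0$,
\[
1 \;=\; |t'|\cdot\Bigl|\sum_{\bi\in\mS}\bw^{\bi+\bo}\Bigr| \;\le\; |t'|\sum_{\bi\in\mS}\prod_j|w_j|^{i_j+1} \;\le\; \frac{1}{|\mS|}\cdot|\mS| \;=\; 1,
\]
so equality holds at each step. This gives $|t'|=1/|\mS|$; it gives $\prod_j|w_j|^{i_j+1}=1$ for every $\bi\in\mS$, which (taking logarithms, all summands being $\le 0$) forces $|w_j|=1$ for every $j$ admitting a step with $j$-th coordinate $\ne -1$ — by the non-triviality hypothesis, every $j$; and it forces the triangle inequality to be tight, so the unit-modulus numbers $\bw^{\bi+\bo}$, $\bi\in\mS$, are all equal, hence (dividing by $w_1\cdots w_d$) the numbers $\bw^{\bi}$, $\bi\in\mS$, are all equal. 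Finiteness follows quickly: fixing a coordinate $k$, non-triviality supplies $\bi\in\mS$ with $i_k=1$, and symmetry about the $x_k$-axis supplies its mirror $\bi-2e_k\in\mS$; from $\bw^{\bi}=\bw^{\bi-2e_k}$ we read off $w_k^2=1$, so $w_k=\pm1$. Thus $\bw\in\{-1,1\}^d$, finitely many choices, and for each such $\bw$ the equation $H(\bw,t)=0$ pins down $t=1/((w_1\cdots w_d)S(\bw))$ (the denominator being nonzero, of modulus $|\mS|$). Hence $D(\bp)\cap\mV$ is finite and lies on the boundary, i.e.\ $\bp$ is a finitely minimal point, and since every such point has all spatial coordinates equal to $\pm1$, Proposition~\ref{prop:critpt}(1) shows each is a critical point.

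It remains to prove isolation. Near a point $(\bw,t')\in D(\bp)\cap\mV$ I would use the reduced critical-point equation~\eqref{eq:CPeqn}: a nearby point of $\mV$ is critical iff $t\,(z_k^2-1)\,(z_1\cdots z_{k-1}z_{k+1}\cdots z_d)S_1^{(k)}(\bz_{\hat{k}})=0$ for each $k$. The factor $t$ is nonzero near $t'\ne 0$, and the polynomial factor $(z_1\cdots z_{k-1}z_{k+1}\cdots z_d)S_1^{(k)}(\bz_{\hat{k}})$ is also nonzero at $(\bw,t')$: writing each $\bw^{\bi}$ with $i_k=1$ as $w_k$ times its $\hat k$-part, the relation ``$\bw^{\bi}$ constant over $\mS$'' forces these $\hat k$-parts to share a common value $c_k$, so $S_1^{(k)}(\bw_{\hat{k}})=s^{(k)}c_k\ne 0$ because $s^{(k)}\ge 1$, while $w_1\cdots w_{k-1}w_{k+1}\cdots w_d=\pm1$. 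Consequently, in a small enough neighbourhood the critical-point equations reduce to $z_k^2=1$ for all $k$, forcing $\bz=\bw$, together with $H=0$, forcing $t=t'$; so $(\bw,t')$ is the only critical point nearby. The delicate point of the whole argument is this last use of tightness: the single identity ``$\bw^{\bi}$ does not depend on $\bi\in\mS$'' does double duty — it both confines $\bw$ to $\{-1,1\}^d$ via mirror pairs of steps and certifies that each $S_1^{(k)}$ is a nonzero multiple of the forward-step count $s^{(k)}$, which is exactly what the isolation step needs. Everything else is bookkeeping, made clean by clearing denominators so that $(z_1\cdots z_d)S(\bz)$ has all coefficients equal to $1$.
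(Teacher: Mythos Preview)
Your proof is correct and follows essentially the same route as the paper: the triangle-inequality argument on $(w_1\cdots w_d)S(\bw)=\sum_{\bi\in\mS}\bw^{\bi+\bo}$ to force $w_k^2=1$ via mirror pairs of steps, then Proposition~\ref{prop:critpt} for criticality, then $S_1^{(k)}(\bw_{\hat k})\neq 0$ for isolation. The one small difference is in the last step: the paper argues by contradiction (if $S_1^{(k)}(\bw_{\hat k})=0$ then $|t_\bw|\ge 1/S_0^{(k)}(\bo)>1/|\mS|$), whereas you reuse the tightness ``all $\bw^{\bi}$ equal'' to compute $S_1^{(k)}(\bw_{\hat k})=s^{(k)}c_k$ directly --- a slightly cleaner variant of the same idea.
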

\begin{proof}
The point $\bp$ is critical as it lies on $\mV$ and its first $d$ coordinates are all one.  Suppose $(\bw,t_\bw)$ lies in $\mV \cap D(\bp)$, where we note that any choice of $\bw$ uniquely determines $t_\bw$ on $\mV$.  Then, as $t_\bw \neq 0$, 
\begin{equation*}
\left| \sum_{(i_1,\dots,i_d) \in \mS}w_1^{i_1+1}\cdots w_d^{i_d+1} \right| = \bigg| (w_1\cdots w_d)S(\bw)\bigg| = \left|\frac{1}{t_\bw}\right| \geq |\mS|.
\end{equation*}
But $(\bw,t_\bw) \in D(\bp)$ implies $|w_j|\leq1$ for each $1\leq j \leq d$.  Thus, the above inequality states that the sum of $|\mS|$ complex numbers of modulus at most one has modulus $|\mS|$.  The only way this can occur is if each term in the sum has modulus one, and all terms point in the same direction in the complex plane.  By symmetry, and the assumption that we take a positive step in each direction, there are two terms of the form $w_2^{i_2+1}\cdots w_d^{i_d+1}$ and $w_1^2w_2^{i_2+1}\cdots w_d^{i_d+1}$ in the sum, so that $w_1^2$ must be 1 in order for them to point in the same direction.  This shows $w_1=\pm1$, and the same argument applies to each $w_k$, so there are at most $2^d$ points in $\mV \cap D(\bp)$.

By Proposition~\ref{prop:critpt} every such point $(\bw,t_\bw) \in \mV \cap D(\bp)$ is critical, and to show it is isolated it is sufficient to prove $S_1^{(k)}(\bw_{\hat{k}}) \neq 0$ for all $1 \leq k \leq d$.  Indeed, if $S_1^{(k)}(\bw_{\hat{k}})=0$ then $\bw \in \mV$ implies
\[ |t_\bw| =\frac{1}{\left| w_1\cdots w_d S_0^{(k)}(\bw_{\hat{k}}) \right|} \geq \frac{1}{\left|S_0^{(k)}(\bw_{\hat{k}})\right|} \geq \frac{1}{S_0^{(k)}(\bo)} > \frac{1}{|\mS|},  \]
by our assumption that $\mS$ contains a step which moves forward in the $k^\text{th}$ coordinate.  This contradicts  $(\bw,t_\bw) \in D(\bp)$.
\end{proof}

\subsection{Asymptotics Results}
\label{sec:results}
To apply the formulas of~\cite{PeWi02} we need to define a few quantities.  To start, we note that on all of $\mV$ we may parametrize the coordinate $t$ as 
\begin{equation*} t(\bz) = \frac{1}{z_1\cdots z_d S(\bz)}. \end{equation*}
For each point $(\bw,t_\bw) \in \mV \cap D(\bp)$, the analysis of \cite{PeWi02} shows that the asymptotics of the integral in question which determines asymptotics for a given model depends on the function
\begin{align}
\tilde{f}^{(\bw)}(\mbox{\boldmath$\theta$}) &= \log\left( \frac{t(w_1e^{i\theta_1},\dots,w_d e^{i\theta_d})}{t_{\bw}}\right) + i \sum_{k=1}^d \theta_k \notag \\
&= \log\left( \frac{ S(\bw)}{e^{i(\theta_1+\cdots+\theta_d)}S(w_1e^{i\theta_1},\dots,w_de^{i\theta_d})}\right) + i(\theta_1+\cdots+\theta_d) \notag \\
&= \log S(\bw) - \log S(w_1 e^{i\theta_1},\dots,w_d e^{i\theta_d}). \label{eq:ftil}
\end{align}
Let $\mathcal{H}_{\bw}$ denote the determinant of the Hessian of $\tilde{f}^{(\bw)}(\mbox{\boldmath$\theta$})$ at $\mathbf{0}$:
\[ \mathcal{H}_{\bw} := \det \tilde{f''}^{(\bw)}(\mathbf{0}) = \begin{vmatrix} 
\tilde{f}^{(\bw)}_{\theta_1 \theta_1}(\mathbf{0}) &  \tilde{f}^{(\bw)}_{\theta_1 \theta_2}(\mathbf{0}) & \cdots & \tilde{f}^{(\bw)}_{\theta_1 \theta_d}(\mathbf{0}) \\[+2mm]
\tilde{f}^{(\bw)}_{\theta_2 \theta_1}(\mathbf{0}) &  \tilde{f}^{(\bw)}_{\theta_2 \theta_2}(\mathbf{0}) & \cdots & \tilde{f}^{(\bw)}_{\theta_2 \theta_d}(\mathbf{0}) \\[+2mm]
\vdots & \vdots & \ddots & \vdots \\[+2mm]
\tilde{f}^{(\bw)}_{\theta_d \theta_1}(\mathbf{0}) &  \tilde{f}^{(\bw)}_{\theta_d \theta_2}(\mathbf{0}) & \cdots & \tilde{f}^{(\bw)}_{\theta_d \theta_d}(\mathbf{0}) 
 \end{vmatrix}, \]
 if $\mathcal{H}_{\bw} \neq 0$ then we say $(\bw,t_\bw)$ is \emph{non-degenerate}.  The main asymptotic result of smooth multivaritate analytic combinatorics, in this restricted context, is the following (the original result allows for asymptotic expansions of coefficient sequences more generally defined from multivariate functions than the diagonal sequence).

\begin{theorem}[Adapted from Theorem 3.5 of \cite{PeWi02}] \label{thm:expGen}
Suppose that the meromorphic function $F(\bz,t) = G(\bz,t)/H(\bz,t)$ has an isolated strictly minimal simple pole at $(\bw,t_\bw)$.  If $tH_t$ does not vanish at $(\bw,t_\bw)$ then there is an asymptotic expansion
\begin{equation} c_n \sim (w_1\cdots w_d \cdot t)^{-n} \sum_{l \geq l_0} C_l n^{-(d+l)/2} \label{eq:Genasm}\end{equation}
for constants $C_l$, where $l_0$ is the degree to which $G$ vanishes near $(\bw,t_\bw)$.  When $G$ does not vanish at $(\bw,t_\bw)$ then $l_0 = 0$ and the leading term of this expansion is
\begin{equation} 
C_0 = (2\pi)^{-d/2}\mathcal{H}_{\bw}^{-1/2} \cdot \frac{G(\bw,t_\bw)}{t H_t(\bw,t_\bw)}.  \label{eq:domterm}
\end{equation}
\end{theorem}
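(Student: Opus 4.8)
The statement to prove is the adapted version of Theorem 3.5 of Pemantle–Wilson (Theorem~\ref{thm:expGen}), giving the asymptotic expansion~\eqref{eq:Genasm} and the leading constant~\eqref{eq:domterm}. Since the paper explicitly labels this as ``adapted from'' a known theorem, the proof will essentially be a derivation sketch / translation of the general smooth-point ACSV result into the normalization used here, rather than an independent argument.

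\medskip

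\noindent\textbf{Proof proposal.}

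The plan is to reduce the diagonal coefficient $c_n$ to a single-variable Cauchy integral, then to a saddle-point integral over a torus, and finally to a Gaussian integral whose evaluation produces~\eqref{eq:Genasm} and~\eqref{eq:domterm}. First I would write $c_n$ as an iterated Cauchy integral of $G/H$ over a torus $|z_j| = r_j$, $|t| = \rho$ with the $r_j$ slightly less than $|w_j|$ and $\rho$ slightly less than $|t_\bw|$, so that the integrand's power series converges there; this is legitimate because $(\bw, t_\bw)$ is \emph{minimal}. Since the pole at $(\bw,t_\bw)$ is simple and $tH_t$ (equivalently $H_t$, and in fact $z_k H_{z_k}$ for a suitable $k$) does not vanish there, one can apply the residue theorem in one of the variables --- say solve $H = 0$ locally for $t = t(\bz)$, which is exactly the parametrization $t(\bz) = 1/(z_1\cdots z_d S(\bz))$ already recorded before the theorem --- to collapse the $t$-integral. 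This leaves a $d$-fold integral over the torus $|z_j| = |w_j|$ of a smooth integrand, namely the residue $G(\bz, t(\bz))/(tH_t)(\bz,t(\bz))$ against $t(\bz)^{-n}$ times the Cauchy kernel $\prod dz_j / z_j$.

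Next I would substitute $z_j = w_j e^{i\theta_j}$ to turn this into an integral over $\bt \in [-\pi,\pi]^d$ of the form
\[
c_n = (w_1\cdots w_d\, t_\bw)^{-n}\,(2\pi)^{-d}\int_{[-\pi,\pi]^d} A(\bt)\, e^{n\, \tilde f^{(\bw)}(\bt)}\, d\bt,
\]
where $\tilde f^{(\bw)}$ is precisely the phase function~\eqref{eq:ftil} --- this is why that function was defined just before the theorem --- and $A(\bt)$ is the smooth amplitude coming from the residue and the Jacobian, with $A(\bzer) = G(\bw,t_\bw)/(tH_t)(\bw,t_\bw)$ when $G$ does not vanish there (and vanishing to order $l_0$ otherwise). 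The key analytic facts to verify are: $\tilde f^{(\bw)}$ has a critical point at $\bt = \bzer$ (immediate, since $t(\bz)$ has a critical point there by the critical-point equations, or by differentiating~\eqref{eq:ftil}); the real part of $\tilde f^{(\bw)}$ is strictly negative away from $\bt = \bzer$ on the torus, so the integral localizes near the origin (this uses strict minimality --- away from the critical point the modulus $|t(\bz)|$ strictly exceeds $|t_\bw|$ on the torus, so $\operatorname{Re}\tilde f < 0$); and the Hessian of $\tilde f^{(\bw)}$ at $\bzer$ is nondegenerate with nonzero determinant $\mathcal H_\bw$, which is exactly the non-degeneracy hypothesis. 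Given these, classical multivariate saddle-point (Laplace/stationary phase, e.g. Hörmander's lemma) yields an asymptotic expansion in descending powers of $n$; because $\tilde f$ vanishes to order two and $A$ vanishes to order $l_0$, the expansion runs over $n^{-(d+l)/2}$ for $l \ge l_0$, giving~\eqref{eq:Genasm}. The leading term, when $l_0 = 0$, is the standard Gaussian evaluation
\[
(2\pi)^{-d}\, A(\bzer)\,\bigl(\det(-\tilde f''^{(\bw)}(\bzer))\bigr)^{-1/2}\,(2\pi)^{d/2} = (2\pi)^{-d/2}\,\mathcal H_\bw^{-1/2}\,\frac{G(\bw,t_\bw)}{tH_t(\bw,t_\bw)},
\]
where a sign/branch bookkeeping identifies $\det(-\tilde f'')$ with $\mathcal H_\bw$ up to the conventional choice of square root; this is~\eqref{eq:domterm}.

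\medskip

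\noindent\textbf{Main obstacle.} The routine part is the Cauchy-integral setup and the Gaussian evaluation; the genuinely delicate steps are (i) justifying that the residue computation and the deformation of contours are valid globally on the torus $|z_j| = |w_j|$ --- i.e. that minimality lets us push the contours out to the critical modulus without crossing other components of $\mV$, which is where \emph{finite} minimality (only finitely many points of $\mV$ on the distinguished boundary, handled by summing finitely many local contributions, all but one of which here are subdominant or equal-modulus) and \emph{isolated}ness of the critical point enter --- and (ii) controlling the branch of the logarithm and the square root so that the constant~\eqref{eq:domterm} comes out with the correct determination of $\mathcal H_\bw^{-1/2}$. Since this theorem is quoted as an adaptation of~\cite[Theorem 3.5]{PeWi02}, the cleanest route is to verify that our $F = G/H$ with the minimal point $(\bw,t_\bw)$ satisfies the hypotheses of that theorem (simple pole, isolated strictly minimal, $tH_t \neq 0$) and then read off~\eqref{eq:Genasm}--\eqref{eq:domterm} after matching their phase/amplitude normalization to $\tilde f^{(\bw)}$ and $\mathcal H_\bw$ as defined above; the finitely-minimal case follows by adding the (equal-modulus) contributions of the remaining $2^d - 1$ boundary points, whose symmetry is exploited in the subsequent section.
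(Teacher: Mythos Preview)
The paper does not prove this theorem at all: it is stated as a quotation (``Adapted from Theorem 3.5 of \cite{PeWi02}'') and then applied immediately in the proof of Theorem~\ref{thm:asm}. So there is no paper proof to compare your proposal against. Your sketch is a faithful outline of the standard Pemantle--Wilson argument (Cauchy integral, residue in $t$, localization to a saddle-point integral in $\bt$, Gaussian evaluation), and the identifications you make with the paper's $\tilde f^{(\bw)}$ and $\mathcal{H}_\bw$ are correct; but for the purposes of this paper the intended ``proof'' is simply the citation.
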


In fact, Corollary 3.7 of \cite{PeWi02} shows that in the case of a finitely minimal point one can simply sum the contributions of each point.  Combining this with the above calculations gives our main result.

\begin{theorem}\label{thm:asm}
Let $\mS \subseteq \{-1,0,1\}^d \setminus \{\mathbf{0}\}$ be a set of unit steps in dimension $d$.  If $\mS$ is symmetric with respect to each axis, and $\mS$ takes a positive step in each direction, then the number of walks of length $n$ taking steps in $\mS$, beginning at the origin, and never leaving the positive orthant has asymptotic expansion
\begin{equation} \label{eq:thmexp}
s_n  = \left[ \left(s^{(1)} \cdots s^{(d)}\right)^{-1/2} \pi^{-d/2} |\mathcal{S}|^{d/2}\right] \cdot n^{-d/2} \cdot |\mathcal{S}|^n + O\left( n^{-(d+1)/2} \cdot |\mathcal{S}|^n \right),
\end{equation}
where $s^{(k)}$ denotes the number of steps in $\mS$ which have $k^\text{th}$ coordinate 1.
\end{theorem}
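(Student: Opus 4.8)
The plan is to run the smooth-point machinery of analytic combinatorics in several variables on the diagonal representation $s_n=[t^n]F(\bo,t)$, where $F(\bo,t)=\Delta\!\left(G(\bz,t)/H(\bz,t)\right)$ with $G/H$ the rational function of Equation~\eqref{eq:rat}. By Proposition~\ref{prop:minpt}, the closed polydisk $D(\bp)$ about $\bp=(\bo,1/|\mS|)$ meets $\mV$ in a finite set of isolated critical points, all lying on the boundary $\{|w_j|=1,\ |t'|=1/|\mS|\}$, and since $\mV$ is smooth each is a simple pole of $G/H$; so Theorem~\ref{thm:expGen} yields a local asymptotic expansion at each such point, and Corollary~3.7 of~\cite{PeWi02} sums their contributions. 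The proof then reduces to (i) deciding which of these points contributes at the leading order $n^{-d/2}|\mS|^n$, and (ii) evaluating the constant in Equation~\eqref{eq:domterm} at $\bp$.

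For (i), I would refine the argument in the proof of Proposition~\ref{prop:minpt}: any $(\bw,t_\bw)\in D(\bp)\cap\mV$ has $w_k=\pm1$ for every $k$, and then $|t_\bw|=1/|\mS|$ forces $|S(\bw)|=|\mS|$, which -- all monomials $\bw^{\bi}$ being $\pm1$ -- occurs exactly when the $\bw^{\bi}$, $\bi\in\mS$, share a single sign. The origin $\bp$ is the only such point with $\bw=\bo$; at every other point some $w_k=-1$, so the numerator $G(\bw,t_\bw)=(1+w_1)\cdots(1+w_d)$ vanishes. Hence, in the notation of Theorem~\ref{thm:expGen}, each of these other points has $l_0\geq1$ and contributes only $O\!\left(n^{-(d+1)/2}|\mS|^n\right)$ -- the exponential rate being $|\mS|^n$ since $|w_1\cdots w_d\,t_\bw|=1/|\mS|$ -- whereas $G(\bp)=2^d\neq0$ gives $l_0=0$ at $\bp$, which therefore produces the term of order $n^{-d/2}|\mS|^n$ together with an $O\!\left(n^{-(d+1)/2}|\mS|^n\right)$ remainder.

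For (ii), set $g(\bt):=S(e^{i\theta_1},\dots,e^{i\theta_d})$, so that $\tilde{f}^{(\bo)}(\bt)=\log S(\bo)-\log g(\bt)$ by Equation~\eqref{eq:ftil}. The symmetry of $\mS$ across each axis makes $\nabla g(\bzer)=0$ and makes the Hessian $g''(\bzer)$ diagonal: reflection in the $j$-th axis pairs up steps so that $\sum_{\bi\in\mS}i_j i_k=0$ for $j\neq k$, while $\sum_{\bi\in\mS}i_j^2=2s^{(j)}$. This gives $\tilde{f''}^{(\bo)}(\bzer)=\tfrac{2}{|\mS|}\operatorname{diag}\!\left(s^{(1)},\dots,s^{(d)}\right)$, and hence
\[
\mathcal{H}_{\bo}=\frac{2^d\,s^{(1)}\cdots s^{(d)}}{|\mS|^{d}}\neq0,
\]
so $\bp$ is nondegenerate; the same computation, using that $\bw^{\bi}$ is constant on $\mS$ at the remaining points of $D(\bp)\cap\mV$, shows $\tilde{f}^{(\bw)}=\tilde{f}^{(\bo)}$ there, so those points are nondegenerate too. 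Substituting $G(\bp)=2^d$, $tH_t(\bp)=(1/|\mS|)(-|\mS|)=-1$, $(w_1\cdots w_d\,t)^{-n}\big|_{\bp}=|\mS|^n$ and $\mathcal{H}_{\bo}$ into Equation~\eqref{eq:domterm} and cancelling the powers of $2$ yields a leading coefficient of magnitude $\left(s^{(1)}\cdots s^{(d)}\right)^{-1/2}\pi^{-d/2}|\mS|^{d/2}$; since $s_n>0$ this equals $C_0$, and combining it with the $O$-terms from the other minimal points gives Equation~\eqref{eq:thmexp}.

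The only genuine computation here is the Hessian determinant, which the axis symmetries trivialize by diagonalizing it, so little needs grinding through; the thing requiring care is the bookkeeping -- fixing the sign (equivalently, the orientation of the Cauchy residue) in Equation~\eqref{eq:domterm} so that $C_0$ emerges positive, and verifying that the non-leading points of $D(\bp)\cap\mV$ genuinely satisfy the hypotheses of Theorem~\ref{thm:expGen} (they do: isolated, finitely minimal, simple poles with $\tilde{f}=\tilde{f}^{(\bo)}$), so that Corollary~3.7 of~\cite{PeWi02} applies as stated.
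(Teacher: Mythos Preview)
Your proposal is correct and follows essentially the same approach as the paper: you invoke Proposition~\ref{prop:minpt} to locate the finitely many contributing points, observe that $G=(1+z_1)\cdots(1+z_d)$ vanishes at all of them except $\bp=(\bo,1/|\mS|)$, verify nondegeneracy via the Hessian (which the axis symmetries diagonalize), and substitute into Equation~\eqref{eq:domterm}, summing contributions by Corollary~3.7 of~\cite{PeWi02}. The only cosmetic difference is that you deduce nondegeneracy at the non-leading points from the identity $\tilde{f}^{(\bw)}=\tilde{f}^{(\bo)}$ (using that $\bw^{\bi}$ is constant on $\mS$ there), whereas the paper computes $\mathcal{H}_\bw$ directly via $S_1^{(k)}(\bw)\neq0$; both arguments are equivalent.
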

\begin{proof}
We begin by verifying that each point $(\bw,t_\bw) \in \mV \cap D(\bp)$ satisfies the conditions of Theorem~\ref{thm:expGen}:

\begin{description}
\item[1. $(\bw,t_\bw)$ is a simple pole] As $\mV$ is smooth, the point $(\bw,t_\bw)$ is a simple pole.
\item[2. $(\bw,t_\bw)$ is isolated] This is proven in Proposition~\ref{prop:minpt}.
\item[3. $tH_t$ does not vanish at $(\bw,t_\bw)$] This follows from $t_\bw H_t(\bw,t_\bw) = 1/(w_1\cdots w_d) \neq 0$.
\item[4. $(\bw,t_\bw)$ is non-degenerate]  Directly taking partial derivatives in Equation~\eqref{eq:ftil} implies
\begin{equation*}
  \tilde{f}^{(\bw)}_{\theta_j\theta_k} (\mathbf{0})
     = \left\{
         \begin{array}{lr}
            \displaystyle w_jw_k\frac{S_{y_jy_k}(\bw) S(\bw) - S_{y_j}(\bw)S_{y_k}(\bw) }{S(\bw)^2} & :  \displaystyle j \neq k\\\\
            \displaystyle \frac{S_{y_jy_j}(\bw) S(\bw) + w_jS_{y_j}(\bw) S(\bw) - S_{y_j}(\bw)^2}{S(\bw)^2}  & : j = k
          \end{array}
      \right. .
\end{equation*}
Since $S_{y_j}(\by) =  (1-y_j^{-2})S_{1}^{(j)}(\by_{\hat{j}})$ we see that $S_{y_j}(\bw)=0$.  Similarly, one can calculate that $S_{y_jy_j}(\bw) = 2 S_1^{(j)}(\bw)$ and $S_{y_jy_k}(\bw) = 0$ for $j\neq k$, so that the Hessian of $\tilde{f}^{(\bw)}(\mbox{\boldmath$\theta$})$ at $\mathbf{0}$ is a diagonal matrix and
\begin{equation} 
\mathcal{H}_\bw = \frac{2^d}{S(\bw)^d} S_1^{(1)}(\bw) \cdots S_1^{(d)}(\bw). \label{eq:Hessian}
\end{equation}
The proof of Proposition~\ref{prop:minpt} implies that $S_1^{(k)}(\bw) \neq 0$ for any $1\leq k\leq d$, so each $(\bw,t_\bw)$ is non-degenerate.
\end{description}

Thus, we can apply Corollary 3.7 of~\cite{PeWi02} and sum the
expansions~\eqref{eq:Genasm} at each point in $\mV \cap D(\bp)$ to
obtain the asymptotic expansion
\begin{equation} s_n \sim |\mS|^n \sum_{\bw \in \mV \cap D(\bp)} \left(\sum_{l \geq l_\bw} C^{\bw}_l n^{-(d+l)/2}\right) \label{eq:genasm}\end{equation}
for constants $C^{\bw}_l$, where $l_\bw$ is the degree to which $G(\by,t)$ vanishes near $(\bw,t_\bw)$. Since the numerator $G(\by,t) = (1+y_1)\cdots(1+y_d)$ vanishes at all points of $\bw \in \mV \cap D(\bp)$ except for \mbox{$\bp=(\bo,1/|\mS|)$}, the dominant term of~\eqref{eq:genasm} is determined only by the contribution of $\bw=\bp$.  Substituting the value for $\mathcal{H}_{\bp}$ given by Equation~\eqref{eq:Hessian} into Equation~\eqref{eq:domterm} gives the desired asymptotic result.
\end{proof}

\section{Examples}
\label{sec:examples}
We now give two examples, both of which calculate critical points by directly solving the critical point equations.  The first example has only a finite number of critical points, all of which are minimal points.  In contrast, the second example contains a \emph{curve} of critical points (however, as guaranteed by Proposition~\ref{prop:minpt}, no points on this curve are minimal points).

\begin{example}
Consider the model in three dimensions restricted to the positive octant taking the eight steps 
\[ \mS = \{(-1,0,\pm1), (1,0,\pm1), (0,1,\pm1), (0,-1,\pm1)\}. \]
The kernel equation here is
\begin{align*} 
xyz(1-tS(x,y,z))F(x,y,z,t) &= xyz - t y(z^2+1) F(0,y,z) - t x(z^2+1) F(x,0,z)\\
&- t(x^2y+y^2x+y+x)F(x,y,0) \\
& + txF(x,0,0) + tyF(0,y,0),
\end{align*}
with characteristic polynomial
\[ S(x,y,z) = (x+y+\ox+\oy)(z+\oz).\]
The generalized orbit sum method implies $F(1,1,1,t) = \Delta B(x,y,z,t)$ where
\begingroup
\addtolength{\jot}{1em}
\begin{align*}
 B(x,y,z,t) &= \frac{(\ox-x)(\oy-y)(\oz-z)}{\ox\hspace{0.02in}\oy\hspace{0.02in}\oz(1-txyzP(\ox,\oy,\oz))}\cdot \frac{1}{(1-x)(1-y)(1-z)}\\
 &= \frac{(1+x)(1+y)(1+z)}{1-t(z^2+1)(x+y)(xy+1)}.
 \end{align*}
 \endgroup
 Next, we verify that the denominator $H(x,z,y,t)$ of $B(x,y,z,t)$ is
 smooth -- i.e., that $H$ and its partial derivatives don't vanish
 together at any point.  This can be checked automatically by
 computing a Gr\"obner Basis of the ideal generated by $H$ and its
 partial derivatives.

In pseudo-code:\footnote{The input is formatted for Maple version 18.}
 \begin{align*}
&> \quad H := 1-t(z^2+1)(x+y)(xy+1): \\
&> \quad \text{\tt GroebnerBasis}([H, H_x, H_y, H_z, H_t], \text{\tt plex}(t,x,y,z));
\end{align*}
\[ [1] \]
The critical points can be computed:
\[ > \quad \text{\tt GroebnerBasis}([H, tH_t - xH_x, tH_t - yH_y,
tH_t - zH_z], \text{\tt plex}(t,x,y,z));\]
%
\[ [z^2-1, y^2-1, x-y, 8t-y] \]
This implies that there is a finitely minimal critical point $\bp = (1,1,1,1/8)$, where
\[ T(\bp) \cap \mV = \{ (1,1,1,1/8), (1,1,-1,1/8), (-1,-1,1,-1/8), (-1,-1,-1,-1/8) \}. \]
The value of $\mathcal{H}_\bw$ can be calculated at each point to be 1/4.  For instance:\\
\begin{minipage}{\textwidth}
\begin{align*}
&> \quad f := \log S(\bo) - \log S\left(e^{i\theta_1},e^{i\theta_2},e^{i\theta_3}\right): \\
&> \quad \text{\tt subs}\left(\theta_1=0,\theta_2=0,\theta_3=0,
  \text{\tt det(Hessian}\left(f,[\theta_1,\theta_2,\theta_3]\right)\right)); 
\end{align*}
\[ 1/4 \]
\end{minipage}
Equation~\eqref{eq:domterm} then gives the asymptotic result
\[ c_n \sim 4\sqrt{2} \cdot \pi^{-3/2} \cdot n^{-3/2} \cdot 8^n. \]
\end{example}

\begin{example}
Consider the model in three dimensions restricted to the positive octant taking the twelve steps 
\[ \mS = \{(-1,0,\pm1), (1,0,\pm1), (0,1,\pm1), (0,-1,\pm1), (\pm1,1,0),(1,\pm1,0)\}. \]
Now, by our previous analysis, $F(1,1,1,t) = \Delta B(x,y,z,t)$ where
\begin{equation}
 B(x,y,z,t) =
\frac{(1+x)(1+y)(1+z)}{1-t(z^2+1)(x+y)(xy+1)-tz(y^2+1)(x^2+1)}.
\end{equation}

The denominator $H(x,z,y,t)$ of $B(x,y,z,t)$ can again be verified to
be smooth, but the ideal encoding the critical point equations is no
longer zero dimensional; i.e., there are an infinite number of
solutions of the critical point equations.  For instance, the
following calculation shows that any point $(1,-1,z,1/4z)$ with $z \neq 0$ is a non-isolated
critical point:
\begin{align*}
&> \quad H := 1-t(z^2+1)(x+y)(xy+1)-tz(y^2+1)(x^2+1): \\
&> \quad I := \text{{\tt subs}}\left(x=1,y=-1,[H, tH_t - xH_x, tH_t - yH_y, tH_t - zH_z]\right): \\
&> \quad \text{{\tt GroebnerBasis}}(I, \text{{\tt plex}}(t,x,y,z)); 
\end{align*}
\[ [1-4tz] \]
Note that none of these points are minimal -- so
Proposition~\ref{prop:minpt} is not contradicted -- since 
\[\left|(1)\cdot(-1)\cdot(z)\cdot (1/4z)\right| = 1/4 > \frac{1}{|\mS|}.\]
\end{example}

\section{Lower order terms}
\label{sec:lot}
Building upon the work of Pemantle and Wilson, Raichev and
Wilson~\cite{RaWi08} refined the asymptotics of
Equation~\eqref{eq:Genasm} and found expressions for the lower order
constants $C_1,C_2,\dots$, theoretically allowing one to
calculate the contribution of each minimal point $\bw \in \mV \cap
D(\bp)$.  To be explicit, Theorem 3.8 of~\cite{RaWi08} gives the
asymptotic contribution of the minimal point $\bw$ as
\begin{align}
 c_n^{(\bw)} = |\mS|^n \cdot \left[ 2^{-d} \pi^{-d/2} S(\bw)^{d/2} \cdot \left( S^{(1)}_1(\bw) \cdots S^{(d)}_1(\bw) \right)^{-1/2} \right] \cdot n^{-d/2} \cdot &\sum_{k=0}^{N-1} n^{-k} L_k(\tilde{u}^{(\bw)},\tilde{f}^{(\bw)}) \label{eqn:lower} \\
&+ O\left(  |\mS|^n \cdot n^{-(d-1)/2-N} \right), \notag
\end{align}
where, for $\star$ denoting the Hadamard product
 \[(a_1,\dots,a_d) \star (b_1,\dots,b_d) = (a_1b_1,\dots,a_db_d), \]
 we have
\begin{align*}
 \tilde{u}^{(\bw)} (\bt) &:= -\frac{1}{t_\bw} \cdot \frac{G(\bw \star e^{i \bt}, t_\bw)}{H_t(\bw \star e^{i \bt}, t_\bw)} \\
 g_{\bw}(\bt)  &:= \log S(\bw) - \log S(\bw \star e^{i\bt}) - \frac{1}{2}\bt \cdot \tilde{f''}^{(\bw)}(\bt) \cdot \bt^T \\
 L_k(\tilde{u}^{(\bw)},\tilde{f}^{(\bw)}) &:= \sum_{r=0}^{2k} \frac{ \mathcal{D}^{r+k}\left(\tilde{u}^{(\bw)}  \cdot g^r_{\bw} \right)(\bzer)}{(-1)^k 2^{r+k} r! (r+k)!},
 \end{align*}
 and $\mathcal{D}$ is the differential operator
 \[ \mathcal{D} = - \sum_{0 \leq r,s \leq d} \left(\text{Inv}\tilde{f''}^{(\bw)}\right)_{r,s} \partial_{\theta_r} \partial_{\theta_s} = - \frac{S(\bw)}{2} \sum_{r=0}^d \frac{1}{S_1^{(1)}(\bw)}\partial^2_{\theta_r}. \]

 This expression is quite involved -- making it hard to derive a
 \emph{general} asymptotic theorem with lower order terms -- but completely
 effective for a given step set.  The principle difficulty determining
  enumerative results for explicit models in the smooth case
 is the identification of points which actually contribute to the
 asymptotic growth.  In the case of highly symmetric walks this is
 accomplished through the characterization of minimal points given in
 Proposition~\ref{prop:minpt}.

 \begin{example} \label{ex:lower} Consider the two dimensional model
   with step set $\{N,S,NE,SE,NW,SW\} = $ \diag{N,S,NE,SE,NW,SW},
   previously computed to have dominant asymptotics
\[ c_n \sim \frac{\sqrt{6}}\pi \cdot \frac{6^n}n. \]
By Proposition~\ref{prop:minpt}, to find the minimal points we simply need to solve the equation
\[ H(x,y,t) = 1 - t(1+y^2+x+xy^2+x^2+x^2y^2) = 0, \]
in $t$ for all $(x,y) \in \{\pm1\}^2$, and check whether the corresponding solution $t_{x,y}$ satisfies \mbox{$|t_{x,y}| = 1/|\mS| = 1/6$}.  Of the four possible points, we get only two minimal points: the expected point $\bp = (1,1,1/6)$ along with the point $\bs = (1,-1,1/6)$.

Computing the terms in expansion~\eqref{eqn:lower} at these two minimal points -- aided by the Sage implementation of \cite{Raic12} -- gives the asymptotic contributions:
\begin{align*}
c^{(\bp)}_n &=  6^n  \left(\frac{\sqrt{6}}{\pi n} - \frac{17 \sqrt{6}}{16\pi n^2} + \frac{605 \sqrt{6}}{512 \pi n^3} + O(1/n^4)\right) \\
c^{(\bs)}_n &=  (-6)^n \left(\frac{\sqrt{6}}{4\pi n^2} - \frac{33 \sqrt{6}}{64 \pi n^3} + O(1/n^4)\right).
\end{align*}
Thus, the counting sequence for the number of walks of length $n$ has the asymptotic expansion
\[ c_n = 6^n \left( \frac{\sqrt{6}}{\pi n}  -  \frac{\sqrt{6}(17 - 4(-1)^n)}{16\pi n^2} + \frac{\sqrt{6}(38720 - 16896(-1)^n)}{32768\pi n^3} + O(1/n^4) \right). \]
\end{example}

\section{From diagonals to differential equations}
\label{sec:tele}

As seen in Corollary~\ref{cor:Dfinite}, the generating function
$F(\bo,t)$ will be D-finite for any highly symmetric model $\mS$.
Indeed, from the expression $F(\bo,t) = \Delta G(\bz,t)/H(\bz,t)$ it
is possible in principle to compute an annihilating linear
differential equation of $F(\bo,t)$ through the use of algorithms for
creative telescoping.  These algorithms, which are typically grouped
into those that perform elimination in an Ore algebra -- including the
famous algorithm of Zeilberger~\cite{ Zeil90} -- and those which use
an ansatz of undetermined coefficients, compute differential operators
annihilating multivariate integrals and connect to diagonals of
rational functions through the relations
\begin{align}
  \frac{1}{2\pi i} \int_\Omega \frac{B(z_1,z_2/z_1,z_3,\dots,z_d,t)}{z_2} d z_2 &= \Delta_{1,2} B(\bz,t) \\
  \left(\frac{1}{2\pi i}\right)^d \int_T \frac{B(z_1,z_2/z_1,z_3/z_2,\dots,z_d/z_{d-1},t/z_d)}{z_1 z_2 \cdots z_d} d\bz  &= \Delta B(\bz,t),
\end{align}
where $B(\bz,t)$ is analytic in a neighbourhood of the origin,
$\Omega$ is an appropriate contour in $\mathbb{C}$ containing the
origin, and $T$ is an appropriate torus in $\mathbb{C}^d$ containing
the origin.  The reader is directed to~\cite{Kout10} and
\cite{BoLaSa13} for details on how these methods work and are
implemented in modern computer algebra systems.  In
Table~\ref{tab:ode} we have computed annihilators for the four highly
symmetric models in two dimensions using an ansatz method developed
and implemented in Mathematica by Koutschan~\cite{Kout10}. 

\begin{table}
\centering
\begin{tabular}{ | c | r | }
  \hline
   $S$ & Annihilating DE \\ \hline
  &\\[-5pt] 
  \diag{N,S,E,W}  & $t^2(4t-1)(4t+1)D_t^3+2t(4t+1)(16t-3)D_t^2$\\
  &$+\left(-6+28 t+224 t^2\right)D_t+(12+64 t)$ \\ [+3mm]  
  \diag{NE,SE,NW,SW} & $t^2(4t+1)(4t-1)^2 D_t^3+t(4t-1)(112t^2-5)D_t^2$ \\
  & $+4(8t-1)(20t^2-3t-1)D_t+\left(-4-48 t+128 t^2\right)$  \\ [+3mm]
  \diag{N,S,NE,SE,NW,SW} & $t^2(6t-1)(6t+1)(2t+1)(2t-1)(12t^2-1)D_t^3$\\
  &$+t(2t-1)(6048t^5+2736t^4-672t^3-336t^2+6t+5)D_t^2$\\
  &$+\left(-4+16 t+516 t^2+96t^3-5520 t^4-2304t^5+17280 t^6\right)D_t$\\
  &$+\left(8+132 t+96 t^2-1104 t^3-1152 t^4+3456 t^5\right)$ \\[+3mm]
  \diag{N,S,E,W,NW,SW,SE,NE} & $-t^2(4t+1)(8t-1)(2t-1)(t+1)D_t^3$\\
  &$+t\left(-5+33 t+252 t^2-200 t^3-576 t^4\right)D_t^2$ \\
  &$+\left(-4+48 t+468 t^2-88 t^3-1152t^4\right)D_t$\\
  &$+\left(12+144 t+72 t^2-384 t^3\right)$ \\
\hline
\end{tabular}\\[2mm]

\caption{Annihilating differential equations for the highly symmetric quarter plane models.} \label{tab:ode}
\end{table}

\begin{table}
\centering
\begin{tabular}{ | c | r | }
  \hline
   Dimension $d$ & Annihilating DE \\ \hline
  &\\[-5pt] 
  3  & $-t^3 (2 t-1) (2 t+1) (6 t-1) (6 t+1) D_t^4$\\
&$-4 t^2 \left(576 t^4+36 t^3-140 t^2-5 t+3\right) D_t^3$ \\
&$-4 t \left(2592 t^4+324 t^3-531 t^2-40 t+9\right) D_t^2$ \\
&$-8\left(1728 t^4+324 t^3-282 t^2-34 t+3\right) D_t$ \\
&$-24 \left(144 t^3+36 t^2-17 t-3\right)$ \\ [+3mm]  
  4 & $-t^4 (4 t-1) (4 t+1) (8 t-1) (8 t+1) D_t^5$\\
&$-4 t^3 (4 t+1) \left(1536 t^3-320 t^2-30 t+5\right) D_t^4$\\
&$-4 t^2 \left(47104 t^4+3968 t^3-2976 t^2-145 t+30\right)D_t^3$\\
&$-12 t \left(45056 t^4+5760 t^3-2368 t^2-191 t+20\right) D_t^2$\\
&$-24 \left(21504 t^4+3712 t^3-848 t^2-106 t+5\right) D_t$\\
&$-96 \left(1024 t^3+224 t^2-24t-5\right)$\\ 
\hline
\end{tabular}\\[2mm]

\caption{Annihilating differential equations for the models $\{e_1,-e_1,\dots,e_d,-e_d\}$.} \label{tab:ode2}
\end{table} 

Given an annihilating linear differential operator of the univariate generating function $F(\bo,t)$, one can easily compute a linear recurrence relation that the counting sequence $(c_n)$ must satisfy. The Birkhoff-Trjitzinsky method (see \cite{ WiZe85} and \cite{ FlSe09}) can then be used to determine a basis of solutions to this recurrence.  Each element of the basis has dominant asymptotic growth of the form
\[ c_n^{(k)} \sim C_k \rho^n n^{\beta_k} (\log n)^{l_k}, \] 
for computable constants $C_k, \rho, \beta_k, l_k$.  Using this technique
to approach an asymptotic analysis for lattice walks in restricted
regions has been used previously -- for instance in the work of Bostan
and Kauers~\cite{ BoKa09} on two dimensional lattice walks confined to
the positive quadrant -- however it is not apparent how the number of
walks in a model, $c_n$, is represented as a linear combination of the
basis elements $c_n^{(k)}$.  Determining this linear combination is
known in the literature as the \emph{connection problem}, as it
describes how the generating function is connected to a local basis of
singular solutions.  This highlights a severe drawback to using the
differential equation for asymptotics, when compared to the methods of
this section: there is no known effective procedure to solve the connection problem
 in general, even when the coefficients of the differential equation are
known to be rational functions (the connection problem is believed by some to be uncomputable~\cite{ FlSe09}).
In essence, this implies that the
multiplicative growth constant of the dominant asymptotic term cannot
be determined rigorously in general (Bostan and Kauers used numerical
approximations to non-rigorously solve the connection problem for
their work on two dimensional models).

\begin{example}
  As seen in Table~\ref{tab:ode}, the step set univariate generating
  function $\sum c_n t^n$ of the quarter-plane model
  $\{N,S,NE,SE,NW,SW\} = $~\diag{N,S,NE,SE,NW,SW} is annihilated by
  the differential operator
\begin{align*} 
\mathcal{L} &=  \left(-t^2+52 t^4-624 t^6+1728 t^8\right)D_t^3+\left(-5 t+4 t^2+348 t^3-4080 t^5-576 t^6+12096 t^7\right)D_t^2\\
  &+\left(-4+16 t+516 t^2+96t^3-5520 t^4-2304t^5+17280 t^6\right)D_t\\
  &+\left(8+132 t+96 t^2-1104 t^3-1152 t^4+3456 t^5\right),
\end{align*}
which implies that the sequence $(c_n)$ satisfies the following linear recurrence relation with polynomial coefficients
\begin{align*}
0 &= \left(-n^3 - 20n^2 - 133n - 294\right) c_{n+6} + \left(4n^2+ 52n + 168\right) c_{n+5}+ \left(52n^3 + 816n^2 + 4304n +7620\right) c_{n+4} \\
&+ \left(96n + 384\right) c_{n+3}
+ \left(-624n^3 - 5952n^2 - 19008n - 20304\right) c_{n+2} 
+ \left(-576n^2 - 2880n - 3456\right) c_{n+1} \\
&+ \left(1728n^3 + 6912n^2+ 8640n + 3456\right)c_n.
\end{align*}

Using the Birkhoff-Trjitzinsky method one computes a basis of local solutions at infinity to this degree six linear recurrence relation (the basis given here was computed using the Sage package of \cite{KaJaJo14}):

\begin{align*} 
c_n^{(1)} &= \frac{6^n}{n} \left(1 -\frac{17}{16}n^{-1} + \frac{605}{512}n^{-2} + O\left(n^{-3}\right)\right) 
&&
c_n^{(2)} = \frac{6^n}{n^2} \left(1 -\frac{33}{16}n^{-1} + \frac{1565}{512}n^{-2} + O\left(n^{-3}\right)\right)  
\\
c_n^{(3)} &= \frac{(2\sqrt{3})^n}{n^4}\left(1 - \frac{14+3\sqrt{3}}{2}n^{-1} + O\left(n^{-2}\right)\right) 
&&
c_n^{(4)} = \frac{(-2\sqrt{3})^n}{n^4}\left(1 - \frac{14-3\sqrt{3}}{2}n^{-1} + O\left(n^{-2}\right)\right) 
\\
 c_n^{(5)} &= \frac{2^n}{n^3}\left(1 - \frac{51}{16}n^{-1} + \frac{3341}{512}n^{-2} +O\left(n^{-3}\right)\right) 
&&
c_n^{(6)} = \frac{(-2)^n}{n^2}\left(1 - \frac{35}{16}n^{-1} + \frac{1805}{512}n^{-2} +O\left(n^{-3}\right)\right),
\end{align*}
so that $c_n = O(6^n/n)$.  Note that the results of Example~\ref{ex:lower} imply
\[ c_n = \frac{\sqrt{6}}{\pi}c_n^{(1)} + \frac{\sqrt{6}}{4\pi}c_n^{(2)} + O\left((2\sqrt{3})^n\right),   \]
and we can partially resolve the connection problem, however this is only possible because leading term asymptotics for $c_n$ were already calculated through the techniques of Pemantle, Raichev, and Wilson.
\end{example}

Although differential operators are very useful data structures for the
D-finite functions which they annihilate, the work above illustrates
that the representation of~$F(\bo,t)$ as a rational diagonal can yield
easier access to its asymptotic information when coupled with the
results of analytic combinatorics in several variables (at least in
the smooth case).  Furthermore, the combinatorial properties of
lattice path models often naturally give representations of their
generating functions as rational diagonals, and determining
annihilating differential operators for these diagonals can be
difficult.  Creative telescoping methods -- although always improving
(see, for example,~\cite{ BoLaSa13}) -- do not scale well with degree
and must be calculated on a model by model basis.

\section{Walks in a Weyl Chamber}
\label{sec:Weyl}

In 1992, Gessel and Zeilberger~\cite{GeZe92} outlined an extension of
the reflection principle -- originally used by Andr{\'e}~\cite{ An87}
in the nineteenth century to solve the two candidate ballot problem --
to lattice walks on regions preserved under the actions of
Coxeter-Weyl finite reflection groups.  In this section we show how
the highly symmetric walks can be viewed in this context.  In addition
to giving an alternative view of the calculations
presented through the kernel method in Section~\ref{sec:lwalks}, this
view also allows us to determine diagonal expressions for the
excursion generating function and permits a segu\"e to  a discussion of how other, non-highly symmetric
models, fit into this template.

\subsection{Weyl Chambers and Reflectable Walks}
The following definitions are taken from Gessel and Zeilberger~\cite{GeZe92}, Grabiner and Magyar~\cite{GrMa93}, and Humphreys~\cite{Hum72}, and the reader is directed to these manuscripts for more details.  

A \emph{(reduced) root system} is a finite set of vectors $\Phi \subset \mathbb{R}^n$ such that
\begin{itemize}
\item for any $x,y \in \Phi$, the set $\Phi$ contains the reflection of $y$ through the hyperplane with normal~$x$ 
\[\sigma_x(y) = y - 2\frac{(x,y)}{(x,x)}x; \]
\item for any $x,y \in \Phi$, $x-\sigma_y(x)$ is an integer multiple of $y$;
\item the only scalar multiples of $x \in \Phi$ to be in $\Phi$ are $x$ and $-x$.
\end{itemize}

The set of linear transformations generated by the reflections
$\sigma_x$ is always a finite Coxeter group and is called the
\emph{Weyl group} $W$ of the root system.  The complement of the union
of the hyperplanes whose normals are the root system is an open set,
and a connected component of this open set is called a \emph{Weyl
  chamber}.  For the root system $\Phi$, a set of \emph{positive
  roots} $\Phi^+$ is a subset of $\Phi$ such that
\begin{enumerate}
\item for each $x \in \Phi$ exactly one of $x$ and $-x$ is in $\Phi^+$;
\item for any two distinct $\alpha,\beta \in \Phi$ such that $\alpha + \beta$ is a root, $\alpha + \beta \in \Phi^+$. 
\end{enumerate}
An element of $\Phi^+$ is called a \emph{simple root} if it cannot be
written as a sum of two elements of $\Phi^+$, and a maximal set
$\Delta$ of simple roots is called a \emph{basis} for the root system.
It can be shown that for a basis $\Delta$ any $x \in \Phi$ is a linear combination of
members of $\Delta$ with all non-negative or non-positive
coefficients, and that the set $\{\sigma_x:x \in \Delta\}$ generates
the Weyl group $W$.

Fix a root system $\Phi$ and a basis $\Delta$, and let 
\begin{itemize}
\item $\mS \subset \mathbb{Z}^n$ be a set of steps such that $W\cdot
  \mS = \mS$ -- i.e., $\mS$ is preserved under each element of the
  Weyl group;
\item $L$ be a lattice, restricted to the linear span of elements of
  $\mS$, such that $W\cdot L = L$;
\item $C$ be the Weyl chamber
\[ C = \{ \bz \in \mathbb{R}^n : (\alpha,\bz) > 0 \text{ for all } \alpha \in \Delta\}. \]
\end{itemize}
The lattice path model in the Weyl chamber $C$ using the steps $\mS$ beginning at a point $\mathbf{a} \in C$ is the combinatorial class of all sequences of steps in $\mS$ beginning at $\mathbf{a}$ and never leaving $C$ (when viewed as a walk on $L$ in the typical manner).  If, in addition to the requirements above, the two conditions
\begin{enumerate}
\item For all $\alpha \in \Delta$ and $s \in \mS$, $(\alpha,s) =\pm k(\alpha)$ or 0, where $k(\alpha)$ is a constant depending only on~$\alpha$;
\item For all $\alpha \in \Delta$ and $\lambda \in L$, $(\alpha,\lambda)$ is an integer multiple of $k(\alpha)$ depending only on $\alpha$;
\end{enumerate}
are met, we say that the lattice path model is \emph{reflectable}, and any step $s \in S$ taken from any lattice point inside $C$ will not leave $C$ except possibly to land on its boundary (one of the hyperplanes whose normals are the elements of $\Phi$).

The main result of Gessel and Zeilberger~\cite{GeZe92}, after a conversion from constant term extraction to diagonal extraction, is the following.

\begin{theorem}[Gessel and Zeilberger~\cite{GeZe92}] \label{thm:GeZe}
 Given a reflectable walk as defined above such that $(a,\alpha)$ is an integer multiple of $k(\alpha)$ for each $\alpha \in \Delta$, and an element $b \in C$ such that $(b,\alpha)$ is also an integer multiple of $k(\alpha)$ for each $\alpha \in \Delta$, the generating function for the number of walks which begin at $a$, end at $b$, and stay in $C$ is
\begin{equation} F_{a\rightarrow b}(t) = \Delta \left[ \frac{1}{1-t(z_1\cdots z_d)S(\bz)} \cdot \bz^{-\mathbf{b}} \cdot \sum_{w \in W} (-1)^{l(w)} \bz^{w(\mathbf{a})}\right], \label{eq:RefB} \end{equation}
where $l(w)$ is the minimal length of $w$ represented as a product of elements in $\{\sigma_x:x \in \Delta\}$.
\end{theorem}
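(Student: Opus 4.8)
The plan is to reduce the statement to the classical reflection-principle count of Gessel and Zeilberger and then carry out the purely formal passage from a constant-term extraction to a diagonal, in the same spirit as the positive-part-to-diagonal correspondence of Proposition~\ref{prop:diag}. The substantive input is the reflection identity of \cite{GeZe92}: for a reflectable model the number $p_n$ of $n$-step walks from $\mathbf{a}$ to $\mathbf{b}$ that stay in $C$ equals
\[ p_n \;=\; \sum_{w\in W}(-1)^{l(w)}\, q_n\bigl(w(\mathbf{a})\to\mathbf{b}\bigr), \]
where $q_n(u\to v)$ is the number of \emph{unrestricted} $n$-step walks on $L$ from $u$ to $v$. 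In the write-up I would recall why the hypotheses force this: the condition $W\cdot\mS=\mS$ ensures that reflecting a walk across any wall $H_\alpha=\{\bz:(\alpha,\bz)=0\}$ again produces a walk with steps in $\mS$; reflectability conditions (1)--(2) together with the assumption that $(\mathbf{a},\alpha)$ is an integer multiple of $k(\alpha)$ guarantee that along any walk issued from $\mathbf{a}$ each coordinate $(\alpha,\cdot)$ stays in $k(\alpha)\mathbb{Z}$ and changes by $0$ or $\pm k(\alpha)$ per step, so such a walk can leave $C$ only by landing exactly on a wall, never by stepping across one; and the usual sign-reversing involution --- reflect the initial segment of a walk that meets a wall across the first wall it meets --- cancels all terms except the walks that never touch a wall, and those contribute only for $w=e$ since $C$ is a fundamental domain for $W$. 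I would cite \cite{GeZe92} (and \cite{Hum72} for the facts about $W$) rather than reprove this.

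Next I translate this into generating functions. Marking the endpoint of a walk by a monomial, a single step multiplies that monomial by $S(\bz)$, so $q_n(u\to v)=[\bz^{\,v-u}]\,S(\bz)^n$, and hence (shifting by $(z_1\cdots z_d)^n$)
\[ [\bz^{\,\mathbf{b}-w(\mathbf{a})}]\,S(\bz)^n \;=\; [z_1^{\,n}\cdots z_d^{\,n}]\,(z_1\cdots z_d)^n\,\bz^{\,w(\mathbf{a})-\mathbf{b}}\,S(\bz)^n . \]
Summing against $t^n$ and using that the $n$-th term of $\sum_n(\cdot)_n t^n$ carries only the power $t^n$ --- so that $\Delta$ merely reads off the $z_1^n\cdots z_d^n$-coefficient of $(\cdot)_n$ --- gives, for each fixed $w$,
\[ \sum_{n\ge 0} q_n\bigl(w(\mathbf{a})\to\mathbf{b}\bigr)\,t^n \;=\; \Delta\!\left[\frac{\bz^{\,w(\mathbf{a})-\mathbf{b}}}{1-t\,(z_1\cdots z_d)\,S(\bz)}\right]. \]
Multiplying by $(-1)^{l(w)}$, summing over $w\in W$ and using linearity of $\Delta$ then yields
\[ F_{a\rightarrow b}(t)\;=\;\sum_{n\ge 0}p_n t^n\;=\;\Delta\!\left[\frac{1}{1-t\,(z_1\cdots z_d)\,S(\bz)}\cdot \bz^{-\mathbf{b}}\cdot\sum_{w\in W}(-1)^{l(w)}\,\bz^{\,w(\mathbf{a})}\right], \]
which is the claim. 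The one point to check carefully is that every coefficient extraction above is applied to an honest Laurent polynomial in $\bz$: expanding $1/(1-t(z_1\cdots z_d)S(\bz))=\sum_m t^m(z_1\cdots z_d)^m S(\bz)^m$, the coefficient of $t^n$ is $(z_1\cdots z_d)^n S(\bz)^n$ times a fixed Laurent polynomial, so all the manipulations are legitimate term by term in $t$.

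I expect no real obstacle beyond bookkeeping: the only nontrivial ingredient, the reflection identity, is imported wholesale from \cite{GeZe92}, and what remains is the translation of a constant-term formula into a diagonal. The main things to be careful about are keeping the lattice $L$ and the step set $\mS$ distinct, so that $w(\mathbf{a})$ and $\mathbf{b}$ are genuine points of $L$ (guaranteed by $W\cdot L=L$ and the integrality hypotheses on $\mathbf{a}$ and $\mathbf{b}$), and invoking reflectability exactly where the "lands on a wall, never across it" property is used so that the first-wall-hit of the involution is well defined.
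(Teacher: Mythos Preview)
Your proposal is correct and matches the paper's treatment exactly: the paper does not prove Theorem~\ref{thm:GeZe} itself but merely cites it as the main result of \cite{GeZe92} ``after a conversion from constant term extraction to diagonal extraction,'' and your write-up supplies precisely that conversion (together with a sketch of the reflection argument it would cite). The bookkeeping you flag---that each coefficient of $t^n$ is a Laurent polynomial in $\bz$, and that $w(\mathbf{a}),\mathbf{b}\in L$---is the right thing to check, and your derivation of the diagonal identity from the constant-term count is sound.
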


If $(b,\alpha)$ is an integer multiple of $k(\alpha)$ for each $\alpha \in \Delta$ and $b \in C$, and the formal power series $\sum_{\mathbf{b} \in C} \bz^{-\mathbf{b}}$ exists (see~\cite{ApKa13} for a discussion on the existence of multivariate Laurent series) then summing Equation~\eqref{eq:RefB} over all possible endpoints implies that the generating function for the number of walks beginning at $a$ and staying in $C$ which are allowed to end anywhere is
\begin{equation} F_a(t) = \Delta \left[ \frac{1}{1-t(z_1\cdots z_d)S(\bz)} \cdot \sum_{\mathbf{b} \in C} \bz^{-\mathbf{b}} \cdot \sum_{w \in W} (-1)^{l(w)} \bz^{w(\mathbf{a})}\right]. \label{eq:RefAny} \end{equation}

\subsection{Classification of Weyl chambers and reflectable walks}
Given two root systems \mbox{$\Phi_1 \subset \mathbb{R}^n$} and $\Phi_2 \subset \mathbb{R}^m$, one can create a new root system $\Phi_1\times\Phi_2$ by treating the two vector spaces spanned by the elements of $\Phi_1$ and $\Phi_2$ as mutually orthogonal subspaces of $\mathbb{R}^{n+m}$.  To this end, a root system $\Phi$ is called \emph{reducible} if it can be decomposed as $\Phi = \Phi_1 \cup \Phi_2$, where $\Phi_1$ and $\Phi_2$ are root systems whose elements are pairwise orthogonal, and \emph{irreducible} otherwise.  

One of the main results in the study of root systems -- which arises in relation to Lie algebras and representation theory -- is a complete classification of the irreducible root systems, consisting of four infinite families ($A_n$ for $n \geq 1$, $B_n$ for $n \geq 2$, $C_n$ for $n \geq 3$, and $D_n$ for $n\geq 4$) and five exceptional cases ($E_6,E_7,E_8,F_4,$ and $G_2$).  The interested reader is directed to Section 11.4 of Humphreys~\cite{Hum72} for details and a proof of the classification. 

\begin{example} \label{ex:A1d}
There is, up to scaling by a constant, one root system in $\mathbb{R}$: the system $\Phi_1 = \{\pm1\}$ with basis $\Delta_1 = \{1\}$, which is called $A_1$.  From this, the root system $A_1\times A_1 = A_1^2 \subset \mathbb{R}^2$ is defined as the direct sum of two copies of $A_1$, giving elements $\Phi_2 = \{\pm e_1,\pm e_2\}$ and basis $\Delta_2 = \{e_1,e_2\}$.  In general, for any $d \in \mathbb{N}$ the root system $A_1^d$ will be the system with elements $\Phi = \{\pm e_1, \dots, \pm e_d\}$, which admits the basis $\Delta = \{e_1,\dots,e_d\}$.
\end{example}

\subsection{Highly symmetric walks are walks in Weyl chambers}
The root system $A_1^d$, described in Example~\ref{ex:A1d}, has corresponding Weyl chamber
\[ C = \{ \bz : z_1>0 \text{ and } z_2>0 \text{ and } \cdots \text{ and } z_d>0 \} = \left(\mathbb{Z}_{>0}\right)^d, \] 
and it follows directly from the definitions above that a step set $\mS \subset \mathbb{Z}^d$ is a reflectable walk with respect to $\Delta$ if and only if it is highly symmetric.  As $C$ does not include the the hyper-planes $\{z_1=0\}, \cdots, \{z_d=0\}$, we shift the origin of the walks under consideration by starting them at the point $\mathbf{a} = \bo$.  The Weyl group $W$ corresponding to this set of roots is isomorphic to $\mathbb{Z}_2^d$ (in fact, it is equal to the group $\mathcal{G}$ as defined in Section~\ref{sec:orbit}) and
\begin{align*}
\sum_{\mathbf{b} \in C} \bz^{-\mathbf{b}} &= \frac{1}{z_1-1} \cdots \frac{1}{z_d-1} \\
\sum_{w \in W} (-1)^{l(w)} \bz^{w(\mathbf{a})} &= (z_1-\oz_1)\cdots (z_d-\oz_d).
\end{align*}
Substitution into Equation~\ref{eq:RefAny} recovers Equation~\eqref{eq:rat}, shifted by a factor of $(z_1\cdots z_d)$ to account for the shifted walk origin $\mathbf{a}$:
\[ F_{\bo}(t) = \Delta \left[ \frac{(1+z_1)\cdots(1+z_d)}{1-t(z_1\cdots z_d)S(\bz)} \cdot (z_1\cdots z_d)\right]. \]
We note that the argument presented in Section~\ref{sec:lwalks} -- which is a standard generalization of the kernel method -- mirrors the proof of Theorem~\ref{thm:GeZe} given by Gessel and Zeilberger.

\subsection{Excursions}
\label{sec:excursions}
Not only can we recover previous results, but we can now give asymptotics for the number of walks which return to the origin.  Taking $\mathbf{a}=\mathbf{b}=\mathbf{1}$ in Equation~\ref{eq:RefB}, we see that the number of excursions $e_n$ is given by
\begin{align*}
e_n &= [t^n] \Delta \left( \frac{(z_1-\oz_1) \cdots (z_d-\oz_d)}{1-t(z_1\cdots z_d)S(\bz)} \cdot (z_1\cdots z_d)^{-1}\right) \\
&= [t^n] \Delta \left( \frac{t^2(z_1^2-1) \cdots (z_d^2-1)}{1-t(z_1\cdots z_d)S(\bz)} \cdot (tz_1\cdots z_d)^{-2}\right) \\
&= [t^{n+2}] \Delta \left( \frac{t^2(z_1^2-1) \cdots (z_d^2-1)}{1-t(z_1\cdots z_d)S(\bz)}\right).
\end{align*}

Note that the form of the final rational function on the right hand
side implies that the same minimal points will appear in the analysis
of excursion asympotics -- however, due to the factors of
$(z_1-1)\cdots(z_d-1)$ now present in the numerator the finitely
minimal point $\rho = (1,...,1,1/|S|)$ will vanish, bringing down the
polynomial growth factor of excursions compared to the asymptotics of
walks ending anywhere.  Furthermore, as more than one minimal point
can now determine the dominant asymptotics closed form results are not
easily obtainable.  Despite that, as the minimal points are still
classified by Proposition~\ref{prop:minpt}, one can use the machinary
available to calculate lower terms in asymptotic expansions (as in
Section~\ref{sec:lot}) to determine the asymptotics of specific
models.

\begin{example}
Consider the highly symmetric 2D step set $\{N,S,NE,SE,NW,SW\} = $ \diag{N,S,NE,SE,NW,SW}.  Here we have
\[ e_n = [t^{n+2}] \Delta \left( \frac{t^2(x^2-1)(y^2-1)}{1-(tx^2y^2+ty^2+tx^2+t+txy^2+tx)}  \right), \]
and as discussed in Example~\ref{ex:lower} this rational function has the expected minimal point $\bp = (1,1,1/6)$ along with the point $\bs = (1,-1,1/6)$.  Computing the terms in expansion~\eqref{eqn:lower} at these two minimal points -- again aided by the Sage implementation of \cite{Raic12} -- gives the asymptotic contributions (after properly shifting index):
\[ e^{(\bp)}_n =  6^n  \left(\frac{3\sqrt{6}}{2\pi n^3} + O(1/n^4)\right) \qquad\qquad 
e^{(\bs)}_n =  (-6)^n \left(\frac{3\sqrt{6}}{2\pi n^3} + O(1/n^4)\right). \]
Thus, the counting sequence for the number of excursions of length $n$ has the asymptotic expansion
\[ e_n = 6^n \left( \frac{3\sqrt{6}}{2\pi n^3}(1 + (-1)^n) + O(1/n^4) \right), \]
where we note that there are no excursions of odd length.
\end{example}

As the denominator of the rational function under consideration is smooth, and the numerator $t^2(z_1^2-1) \cdots (z_d^2-1)$ vanishes at any minimal point to order $d$, the asymptotic expansion given in Equation~\eqref{eqn:lower} implies the following.

\begin{theorem}
\label{thm:excursion}
Let $\mS \subseteq \{-1,0,1\}^d \setminus \{\mathbf{0}\}$ be a set of unit steps in dimension $d$.  If $\mS$ is symmetric with respect to each axis, and $\mS$ takes a positive step in each direction, then the number of walks $e_n$ of length $n$ taking steps in $\mS$, beginning and ending at the origin, and never leaving the positive orthant satisfies
\[ e_n = O\left(\frac{|\mS|^n}{n^{3d/2}}\right). \]
\end{theorem}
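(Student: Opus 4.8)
The plan is to leverage the excursion diagonal expression already derived in Section~\ref{sec:excursions}, namely
\[
e_{n+2} = [t^n]\,\Delta\!\left(\frac{t^2(z_1^2-1)\cdots(z_d^2-1)}{1-t(z_1\cdots z_d)S(\bz)}\right),
\]
together with the structural facts about the singular variety $\mV$ already established. First I would note that, up to the harmless prefactor $t^2$ and the index shift by $2$, the rational function here has exactly the same denominator $H(\bz,t)=1-t(z_1\cdots z_d)S(\bz)$ as the function analyzed for Theorem~\ref{thm:asm}. Consequently the smoothness argument of Section~\ref{sec:acsv}, the critical point characterization of Proposition~\ref{prop:critpt}, and — crucially — the minimal point characterization of Proposition~\ref{prop:minpt} all carry over verbatim: the set $\mV\cap D(\bp)$ is the same finite set of at most $2^d$ points $(\bw,t_\bw)$ with $\bw\in\{\pm1\}^d$, each isolated, non-degenerate (with the same Hessian $\mathcal{H}_\bw$ from Equation~\eqref{eq:Hessian}, which is nonzero there), and each contributing to dominant asymptotics only up to the exponential scale $|\mS|^n$.

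Next I would invoke the Raichev--Wilson expansion~\eqref{eqn:lower}, but with the \emph{new} numerator $\tilde G(\bz,t) = t^2(z_1^2-1)\cdots(z_d^2-1)$ in place of $G=(1+z_1)\cdots(1+z_d)$. The key observation is that at \emph{every} point $\bw\in\{\pm1\}^d$, each factor $(z_k^2-1)$ vanishes (since $w_k^2=1$), so $\tilde G$ vanishes to order \emph{at least} $d$ at each minimal point. By the structure of Theorem~\ref{thm:expGen} (Equation~\eqref{eq:Genasm}), when $G$ vanishes to order $l_0$ near an isolated strictly minimal simple pole the expansion begins at $n^{-(d+l_0)/2}$; here $l_0\geq d$ at each of the finitely many minimal points, so each point contributes $O\!\left(|\mS|^n n^{-(d+d)/2}\right) = O\!\left(|\mS|^n n^{-d}\right)$. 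Wait — this gives $n^{-d}$, not $n^{-3d/2}$; so the bookkeeping needs care. The correct accounting is that in~\eqref{eqn:lower} the overall prefactor already carries $n^{-d/2}$ and the vanishing of the numerator to order $d$ means the first $d$ of the correction terms $L_k$ (and the leading one) vanish, i.e. the sum effectively starts at $k\geq d/2$ — I would verify that the $t^2$ factor and each $(z_k^2-1)$ factor together kill enough of the Taylor data of $\tilde u^{(\bw)}$ at $\bzer$ so that $L_0 = \cdots = L_{\lceil d/2\rceil-1} = 0$, pushing the leading surviving term to $n^{-d/2}\cdot n^{-d} = n^{-3d/2}$. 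Since there are at most $2^d$ such points, summing their contributions preserves the bound $e_n = O\!\left(|\mS|^n/n^{3d/2}\right)$.

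The main obstacle I anticipate is making the order-of-vanishing argument fully rigorous inside the Raichev--Wilson machinery: one must check that $\tilde G$ vanishing to order $d$ at $(\bw,t_\bw)$ really does translate into the quantity $\tilde u^{(\bw)}(\bt)$ vanishing to order $d$ at $\bt=\bzer$ (the map $\bt\mapsto \bw\star e^{i\bt}$ is a local diffeomorphism near $\bzer$, so this should be immediate, but the factor $H_t$ in the denominator of $\tilde u^{(\bw)}$ must be checked not to introduce a pole — it does not, since $t_\bw H_t(\bw,t_\bw)=1/(w_1\cdots w_d)\neq 0$ as in condition~3 of the proof of Theorem~\ref{thm:asm}), and then that a function vanishing to order $d$ at $\bzer$ contributes nothing to $L_k$ for $k < d/2$ because $\mathcal{D}^{r+k}$ applied to $\tilde u^{(\bw)}\cdot g_\bw^r$ and evaluated at $\bzer$ vanishes whenever $2(r+k) < d + 2r$, i.e. $2k<d$. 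Once this combinatorial-degree check is in place, the theorem follows by summing the $O(|\mS|^n n^{-3d/2})$ contributions over the finitely many minimal points $\bw\in\mV\cap D(\bp)$, with the possibility that the subdominant critical-point strata (the non-minimal ones identified in Proposition~\ref{prop:minpt}) contribute only exponentially smaller terms. I would also remark, as the paper does in the $d=2$ example, that when $|\mS|$ admits excursions only of even length the actual order may be realized, but the $O$-bound is all that is claimed here.
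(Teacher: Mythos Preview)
Your overall strategy is exactly the paper's: observe that the excursion diagonal has the \emph{same} denominator $H$, so all of the smoothness, critical-point, and minimal-point analysis (Propositions~\ref{prop:critpt} and~\ref{prop:minpt}, non-degeneracy, Equation~\eqref{eq:Hessian}) carries over verbatim; then note that the new numerator $t^2\prod_k(z_k^2-1)$ vanishes at \emph{every} minimal point to order $d$, and invoke the Raichev--Wilson expansion~\eqref{eqn:lower}. The paper's proof is literally that one sentence.

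Where your proposal runs into trouble is in the bookkeeping you yourself flag. Your degree-count shows $\mathcal D^{r+k}(\tilde u^{(\bw)}g_\bw^r)(\bzer)=0$ whenever $2(r+k)$ is less than the order of vanishing of $\tilde u^{(\bw)}g_\bw^r$; since $g_\bw$ vanishes to order at least $3$ (not $2$ --- it is $\tilde f^{(\bw)}$ minus its second-order Taylor polynomial), the inequality should read $2(r+k)<d+3r$, which still gives only $L_k=0$ for $k<d/2$. That places the first surviving term at $k=\lceil d/2\rceil$, hence a contribution of order $n^{-d/2}\cdot n^{-\lceil d/2\rceil}=O(n^{-d})$, \emph{not} $O(n^{-3d/2})$. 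Your sentence ``pushing the leading surviving term to $n^{-d/2}\cdot n^{-d}=n^{-3d/2}$'' silently assumes $L_k=0$ for all $k<d$, which is strictly stronger than what your inequality establishes; the arithmetic and the stated range $L_0=\cdots=L_{\lceil d/2\rceil-1}=0$ do not match.

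Getting the full exponent $3d/2$ requires more than the bare order of vanishing: one must use that the numerator factors as one linear factor $(z_k^2-1)$ \emph{per coordinate}, that the Hessian is diagonal (so $\mathcal D=\sum_j a_j\partial_{\theta_j}^2$ involves only pure second derivatives), and that the axis symmetry of $S$ forces $\tilde f^{(\bw)}$ --- hence $g_\bw$ --- to be \emph{even in each $\theta_k$ separately}. These structural constraints, not just total degree, are what force enough of the $L_k$ to vanish. You gesture at the factored form (``each $(z_k^2-1)$ factor''), but the argument you actually write down does not use it, and without it the bound you obtain is $O(|\mS|^n/n^d)$.
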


\section{Conclusion}
\label{sec:conc}

The purpose of this article, aside from the specific combinatorial
results it contains, is to reinforce the notion that there are many
possibilities for studying lattice walks in restricted regions through
the use of diagonals and analytic combinatorics in several
variables: in this context the diagonal data structure often permits
analysis in general dimension. Furthermore, walks with symmetry across each axis all have a smooth
singular variety, making them the perfect entry point to this
confluence of the kernel method, the reflection principle and analytic combinatorics of several variables.

\subsection{Generalizations: Other Weyl Chambers}
A major goal moving forward is to deal with more general step set models.
As a first attempt, we have also considered reflectable walks in~$A_2$, and
some other related models, and this gives nice diagonal expressions
for the generating functions for the models with group order 6 in the
classification of~\cite{BoMi10}. However, the expressions are far
more difficult to analyze with these asymptotic techniques, since the
expressions no longer fall in the simplest, smooth case. 

More generally, Grabiner and Magyar~\cite{GrMa93} have classified, for
each irreducible root system $\Phi$, the step sets which give rise to
a reflectible lattice path model in the corresponding Weyl chamber.
This combinatorial classification gives a large collection of future
objects to study through the means of analytic combinatorics in
several variables. Assuming one can get the generating function for the number
of walks in a more general setting as a rational diagonal, results on
asymptotics can be reduced to an analysis of this rational function.
Both~\cite{PeWi13} and~\cite{RaWi11} give results for singular
varieties which are non-smooth, but whose critical points are
\emph{multiple points}.  Due to the constraints on the rational
functions arising from the combinatorial nature of lattice paths in
restricted regions, there is hope for a completely systematic treatment which
allows for some non symmetries.  

This leads to the natural question, can the infamous Gessel walks be
expressed as walk in a Weyl Chamber? A positive answer could result in a far
simpler path to a generating function expression than those presently
known, even the methods explicitly derived by humans~\cite{BoKuRa14},
and a negative answer might help explain why it has resisted simpler
approaches. 

Furthermore, the asymptotic enumeration of excursions has received much
attention lately, due to the recent work of Denisov and Wachtel. It
could be interesting to link their work to expressions using
diagonals in the case of D-finite models. The results
of~\cite{BoRaSa14} suggest very compelling evidence that the boundary
between D-finite models and non-D-finite models leaves strong traces in
the asymptotic enumeration.

\subsection{Are all D-finite models diagonals?}
Across the study of lattice path models to date, it has been true that
every model with a D-finite generating function is accompanied by an
expression of the generating function as a diagonal of a rational
function (or equivalent).  A conjecture of Christol~\cite{ Chri90}
posits that any globally bounded D-finite function (which includes
power series convergent at the origin with integer coefficients) can
be written as the diagonal of a multivariate rational function. Could
one prove a lattice path version of this conjecture? More practically,
could such a result be made effective with an automatic
method of writing known D-finite functions as diagonals?

Finally, it would be interesting to understand if there is a direct
combinatorial interpretation for the diagonal operator acting of
rational functions. Recent work of Garrabrant and Pak~\cite{GaPa14}
gives a tiling interpretation of diagonals of $\mathbb{N}$-rational
functions. Our rationals here are very combinatorial, although they
have some negative coefficients. Very possibly a signed version of
their construction might capture the diagonals that we build.

\section{Acknowledgments}
The authors would like to thank Manuel Kauers for the construction in
Proposition~\ref{prop:diag}, and illuminating discussions on diagonals
of generating functions, and the anonymous referees of an extended abstract
of this work for their comments and suggestions. 
We are also grateful to Mireille Bousquet-M\'elou for
pointing out some key references. 

\bibliographystyle{plain}
\bibliography{bibl}

\end{document}